\theoremstyle{plain}
\newtheorem{theorem}{Theorem}[section]
\newtheorem{lemma}[theorem]{Lemma}
\newtheorem{proposition}[theorem]{Proposition}
\theoremstyle{definition}
\newtheorem{definition}[theorem]{Definition}
\newtheorem{example}[theorem]{Example}
\newtheorem{notation}[theorem]{Notation}
\newtheorem{remark}[theorem]{Remark}
\newcommand{\clC}{\mathcal{C}}
\newcommand{\clD}{\mathcal{D}}
\newcommand{\clE}{\mathcal{E}}
\newcommand{\clI}{\mathcal{I}}
\newcommand{\za}{\alpha}
\newcommand{\zg}{\gamma}
\newcommand{\zD}{\Delta}
\newcommand{\Id}{\operatorname{Id}}
\newcommand{\pr}{\operatorname{pr}}
\newcommand{\nequ}{\mbox{$n$-equivalence}}
\newcommand{\nid}{\noindent}
\newcommand{\bk}{\bigskip}
\newcommand{\mk}{\medskip}
\newcommand{\ovl}[1]{\overline{#1}}
\newcommand{\up}[1]{^{(#1)}}
\newcommand{\lo}[1]{_{(#1)}}
\newcommand{\rw}{\rightarrow}
\newcommand{\Rw}{\Rightarrow}
\newcommand{\lw}{\leftarrow}
\newcommand{\xrw}{\xrightarrow} 
\newcommand{\xlw}{\xleftarrow} 
\newcommand{\tiund}[1]{{\times}_{#1}\:}
\newcommand{\pro}[3]{#1\tiund{#2}\overset{#3}{\cdots}\tiund{#2}#1}
\newcommand{\tens}[2]{#1\,\tiund{#2}\,#1}
\newcommand{\oset}[2]{\overset{#1}{#2}}
\newcommand{\ata}{A\tiund{B}A}
\newcommand{\mi}{\text{-}}
\newcommand{\nm}{(n-1)}
\newcommand{\cop}{\textstyle{\,\coprod\,}}
\newcommand{\seqc}[3]{{#1}_{#2},...,{#1}_{#3}}
\newcommand{\dop}[1]{\Delta^{{#1}^{op}}}
\newcommand{\Dop}{\Delta^{op}}
\newcommand{\Dnop}{\Delta^{{n}^{op}}}
\newcommand{\Dmenop}{\Delta^{{n-1}^{op}}}
\newcommand{\cat}[1]{\mbox{$\mathsf{Cat^{#1}}$}}
\newcommand{\Cat}{\mbox{$\mathsf{Cat}\,$}}
\newcommand{\Gpd}{\mbox{$\mathsf{Gpd}$}}
\newcommand{\Af}{A[f]}
\newcommand{\eqr}[1]{\mbox{$\mathsf{EqRel^{#1}}$}}
\newcommand{\cathd}[1]{\mbox{$\mathsf{Cat_{hd}^{#1}}$}}
\newcommand{\catwg}[1]{\mbox{$\mathsf{Cat_{wg}^{#1}}$}}
\newcommand{\muk}{\mu_k}
\newcommand{\hmu}[1]{\hat\mu_{#1}}
\newcommand{\hmuk}{\hat{\mu}_k}
\newcommand{\Nn}{N_{(n)}}
\newcommand{\N}[1]{N_{(#1)}}
\newcommand{\Nb}[1]{N_{(#1)}}
\newcommand{\Nu}[1]{N^{(#1)}}
\newcommand{\di}[1]{d^{(#1)}}
\newcommand{\p}[1]{p^{(#1)}}
\newcommand{\op}[1]{\bar{p}^{(#1)}}
\newcommand{\zgu}[1]{\zg^{(#1)}}
\newcommand{\Set}{\mbox{$\mathsf{Set}$}}
\newcommand{\uk}{\underline{k}}
\newcommand{\us}{\underline{s}}
\newcommand{\funcat}[2]{[\Delta^{{#1}^{op}},#2]}
\newcommand{\nfol}{$n$-fold }
\newcommand{\diag}{\mbox{$diag\,$}}
\begin{document}

\title [\tiny{Homotopically discrete higher categorical structures}]{Homotopically discrete higher categorical structures\\ }

\author[\tiny{Simona Paoli}]{Simona Paoli}
 \address{Department of Mathematics, University of Leicester,
LE17RH, UK}
 \email{sp424@le.ac.uk}

\date{16 May 2016}

\keywords{$n$-fold category, equivalence relation, 0-type}

\subjclass[2010]{18Dxx}


\begin{abstract}
We introduce the notion of homotopically discrete \nfol category as a \nfol generalization of a groupoid with no non-trivial loops. We give two equivalent descriptions of this structure: in terms of a Segal-type model and in terms of iterated internal equivalence relations. We also show that homotopically discrete \nfol categories form a \nfol categorical model of 0-types.
\end{abstract}


\maketitle
\thispagestyle{fancy}

\section{Introduction}\label{sec-intro}
A 0-type is a topological space whose homotopy groups are zero in dimension greater than 0. The category of sets is a model of 0-types: to a 0-type we associate the set of its path components. Simplicial sets are well known combinatorial models of spaces \cite{Jard}. In terms of simplicial sets, a 0-type is a simplicial set which is weakly homotopy equivalent to a constant simplicial set: in other words, it is \emph{homotopically discrete}.

In this paper we investigate the notion of homotopically discrete structure in the realm of higher category theory.
In the category $\Cat$ of categories and functors, the notion of homotopically discrete is known: A category equivalent to a discrete one amounts to a groupoid with no non-trivial loops, also called an equivalence relation.

The extension of this notion to higher categories is less straightforward: our interest in developing this extension stems from author's development of \emph{weak globularity} as a new paradigm to weaken higher categorical structures.

In the classical approach to weak $n$-categories, we have sets of cells in dimensions $0$ up to $n$. These cells have compositions which are either associative and unital, as in a strict $n$-category, or they are associative an unital up to coherent isomorphisms. Making this idea precise lead to several different models of higher category. These models use different types of combinatorics, including multi-simplicial sets as in Tamsamani and Simpson \cite{Simp}, \cite{Ta}, (higher) operads as in Batanin  \cite{B}, Leinster \cite{Lein} and Trimble  \cite{Cheng2}, opetopes as in \cite{BD2} \cite{Cheng1},  and several others.

The idea of the weakly globular approach to higher categories if that the discrete structures of the higher cells are replaced by homotopically discrete structures, suitably defined. This represents a new way of weakening a higher categorical structure, leading to new models of weak higher categories suitably equivalent to the classical ones. These new models, which are based on iterated internal categories, have features useful in applications as well as further theoretical developments within higher category theory.

This approach was successfully implemented by the author in the higher groupoidal case. For the modeling of connected $(n+1)$-types the author introduced weakly globular $\mathrm{cat^n\mi groups}$ \cite{Pa}; the latter were shown in \cite{Pa} to be suitable models of $(n+1)$-types which are easier to work with than general $\mathrm{cat^n\mi groups}$  \cite{Lod}; further, they are suitably equivalent to the Tamsamani-Simpson model of higher groupoids \cite{Pa}. The homotopically discrete sub-structures replacing the sets of higher cells in a weakly globular $\mathrm{cat^n\mi group}$ were called in \cite{Pa} 'strongly contractible $\mathrm{cat^n\mi groups}$'.

In  \cite{BP} Blanc and the author developed the notion of weak globularity  to model general $n$-types, via  weakly globular \nfol groupoids. The sets of higher cells were replaced there by 'homotopically discrete \nfol groupoids' (see \cite{BP} for details).

Outside the groupoidal case, for $n=2$, weakly globular double categories were developed in joint work by the author  \cite{PP}, giving a model of weak 2-categories suitably equivalent to bicategories. In a weakly globular double categoy the set of $0$-cells is replaced by a homotopically discrete category (that is, an equivalence relation).

The extension of this program to $n>2$ via the notion of weakly globular $n$-fold category needs a suitable model of homotopically discrete objects, which we call \emph{homotopically discrete $n$-fold categories}. In this paper we define the latter and establish its main properties, while their use in the definition of weakly globular $n$-fold categories will be given in subsequent work \cite{Pa2},\cite{Pa3},\cite{Pa4}.

Homotopically discrete $n$-fold categories are more general than the homotopically discrete $n$-fold groupoids of \cite{BP}. In particular, they are \nfol categories but not in general \nfol groupoids since only some but not all of the $n$ different simplicial directions in the structure are required to be groupoidal. This added generality makes them more suitable to construct  the notion of weakly globular \nfol category.

In this work we present two equivalent descriptions of homotopically discrete $n$-fold categories: one is a multi-simplicial description in the spirit of the Segal-type models of \cite{BP}, built inductively on dimension. The second description is more conceptual and uses an iteration of the notion of internal equivalence relation. We establish in Theorem \ref{the-hom-disc-neq-rel} that these two descriptions are equivalent.

The formal definition of the category $\cathd{n}$ of homotopically discrete \nfol categories is given by induction on dimension (see Definition \ref{def-hom-dis-ncat}) and in particular a homotopically discrete \nfol category is a simplicial object in homotopically discrete $(n-1)$-fold categories. By iterating the nerve construction one obtains the functor
\begin{equation*}
    J_n:\cathd{n}\rw \funcat{n-1}\Cat\;.
\end{equation*}
We show in Lemma \ref{lem-pos-grou-hom-disc} that if $X\in\cathd{n}$, $J_n X$ is levelwise an equivalence relation.

An equivalence relation is categorically equivalent to a discrete category. Similarly one expects a homotopically discrete $n$-fold category to be equivalent to a discrete structure in a higher categorical sense.

This is indeed the case. We define $n$-equivalences between homotopically discrete \nfol categories (Definition \ref{def-hom-dis-ncat-1}): these are a higher dimensional generalization of a functor which is fully faithful and essentially surjective on objects.

There is a simple characterization of $\nequ$s of two homotopically discrete \nfol categories in terms of isomorphisms of the equivalent discrete structures (see Lemma \ref{lem-neq-hom-disc}).

Using this characterization we show that every  homotopically discrete \nfol category $X$ is $n$-equivalent to a discrete $n$-fold category $X^d$ via a 'discretization' map $d:X\rw X^d$. We also show that certain maps called induced Segal maps for a homotopically discrete \nfol category are $\nm$-equivalences (see Proposition \ref{pro-ind-map-hom-disc}). Together with its definition, this makes homotopically discrete \nfol categories a Segal-type model in the sense of \cite{BP}.

In the last part of Section \ref{sec-hom-dis-ncat} we show that homotopically discrete \nfol categories can be described by iterating the notion of internal equivalence relation $\Af$ corresponding to a morphism $f:A\rw B$ in a category $\clC$ with finite limits (see Definition \ref{def-int-eq-rel}). When  $\clC=\Set$ and $f$ is surjective, this is the usual notion of equivalence relation. We define the category $\eqr{n}$ of $\nequ$ relations and we show in Theorem \ref{the-hom-disc-neq-rel} that it is isomorphic to the category $\cathd{n}$.

\bk

\textbf{Acknowledgements}: This work has been supported by a Marie Curie International Reintegration Grant No 256341. I thank the Centre for Australian Category Theory for their hospitality and financial support during August-December 2015, as well as the University of Leicester for its financial support during my study leave. I also thank the University of Chicago for its hospitality and financial support during April 2016.


\section{Preliminaries}\label{sec-prelim}
In this section we review some basic simplicial techniques that we will use throughout the paper. The material in this section is well-known, see for instance \cite{Borc}, \cite{Jard}.
\subsection{Simplicial objects}\label{sbs-simp-tech}
Let $\zD$ be the simplicial category and let $\Dnop$ denote the product of $n$ copies of $\Dop$. Given a category $\clC$, $\funcat{n}{\clC}$ is called the category of $n$-simplicial objects in $\clC$ (simplicial objects in $\clC$ when $n=1$).
\begin{notation}\label{not-simp}
    If $X\in \funcat{n}{\clC}$ and $\uk=([k_1],\ldots,[k_n])\in \Dnop$, we shall denote $X ([k_1],\ldots,[k_n])$ by $X(k_1,\ldots,k_n)$, as well as $X_{k_1,\ldots,k_n}$ and $X_{\uk}$.


    Every $n$-simplicial object in $\clC$ can be regarded as a simplicial object in $\funcat{n-1}{\clC}$ in $n$ possible ways. For each $1\leq i\leq n$ there is an isomorphism
    \begin{equation*}
        \xi_i:\funcat{n}{\clC}\rw\funcat{}{\funcat{n-1}{\clC}}
    \end{equation*}
    given by
    \begin{equation*}
        (\xi_i X)_{r}(k_1,\ldots,k_{n-1})=X(k_1,\ldots,k_{i-1},r,k_{i+1},\ldots,k_{n-1})
    \end{equation*}
    for $X\in\funcat{n}\clC$.
\end{notation}
\begin{definition}\label{def-fun-smacat}
    Let $F:\clC \rw \clD$ be a functor, $\clI$ a small category. Denote
    \begin{equation*}
        \ovl{F}:[\clI,\clC]\rw [\clI,\clD]
    \end{equation*}
    the functor given by
    \begin{equation*}
        (\ovl{F} X)_i = F(X(i))
    \end{equation*}
    for all $i\in\clI$.
\end{definition}
\begin{definition}\label{def-seg-map}
    Let ${X\in\funcat{}{\clC}}$ be a simplicial object in any category $\clC$ with pullbacks. For each ${1\leq j\leq k}$ and $k\geq 2$, let ${\nu_j:X_k\rw X_1}$ be induced by the map  $[1]\rw[k]$ in $\Delta$ sending $0$ to ${j-1}$ and $1$ to $j$. Then the following diagram commutes:
\begin{equation}\label{eq-seg-map}
\xymatrix{
&&&& X\sb{k} \ar[llld]_{\nu\sb{1}} \ar[ld]_{\nu\sb{2}} \ar[rrd]^{\nu\sb{k}} &&&& \\
& X\sb{1} \ar[ld]_{d\sb{1}} \ar[rd]^{d\sb{0}} &&
X\sb{1} \ar[ld]_{d\sb{1}} \ar[rd]^{d\sb{0}} && \dotsc &
X\sb{1} \ar[ld]_{d\sb{1}} \ar[rd]^{d\sb{0}} & \\
X\sb{0} && X\sb{0} && X\sb{0} &\dotsc X\sb{0} && X\sb{0}
}
\end{equation}

If  ${\pro{X_1}{X_0}{k}}$ denotes the limit of the lower part of the
diagram \eqref{eq-seg-map}, the $k$-th Segal map for $X$ is the unique map
$$
\muk:X\sb{k}~\rw~\pro{X\sb{1}}{X\sb{0}}{k}
$$
\noindent such that ${\pr_j\,\muk=\nu\sb{j}}$ where
${\pr\sb{j}}$ is the $j^{th}$ projection.
\end{definition}
\begin{definition}\label{def-ind-seg-map}

    Let ${X\in\funcat{}{\clC}}$ and suppose that there is a map in $\clC$  $\zg: X_0 \rw X^d_0$  such that the limit of the diagram
\begin{equation*}
\xymatrix@C20pt{
& X\sb{1} \ar[ld]_{\zg d\sb{1}} \ar[rd]^{\zg d\sb{0}} &&
X\sb{1} \ar[ld]_{\zg d\sb{1}} \ar[rd]^{\zg d\sb{0}} &\cdots& k &\cdots&
X\sb{1} \ar[ld]_{\zg d\sb{1}} \ar[rd]^{\zg d\sb{0}} & \\
X^d\sb{0} && X^d\sb{0} && X^d\sb{0}\cdots &&\cdots X^d\sb{0} && X^d\sb{0}
    }
\end{equation*}
exists; denote the latter by $\pro{X_1}{X_0^d}{k}$. Then the following diagram commutes, where $\nu_j$ is as in Definition \ref{def-seg-map},
\begin{equation*}
\xymatrix{
&&&& X\sb{k} \ar[llld]_{\nu\sb{1}} \ar[ld]_{\nu\sb{2}} \ar[rrd]^{\nu\sb{k}} &&&& \\
& X\sb{1} \ar[ld]_{\zg d\sb{1}} \ar[rd]^{\zg d\sb{0}} &&
X\sb{1} \ar[ld]_{\zg d\sb{1}} \ar[rd]^{\zg d\sb{0}} && \dotsc &
X\sb{1} \ar[ld]_{\zg d\sb{1}} \ar[rd]^{\zg d\sb{0}} & \\
X^d\sb{0} && X^d\sb{0} && X^d\sb{0} &\dotsc X^d\sb{0} && X^d\sb{0}
}
\end{equation*}
The $k$-th induced Segal map for $X$ is the unique map
\begin{equation*}
\hmuk:X\sb{k}~\rw~\pro{X\sb{1}}{X^d\sb{0}}{k}
\end{equation*}
such that ${\pr_j\,\hmuk=\nu\sb{j}}$ where ${\pr\sb{j}}$ is the $j^{th}$ projection.
\end{definition}
\subsection{$\mathbf{n}$-Fold internal categories}\label{sbs-nint-cat}

Let  $\clC$ be a category with finite limits. An internal category $X$ in $\clC$ is a diagram in $\clC$
\begin{equation}\label{eq-nint-cat}
\xymatrix{
\tens{X_1}{X_0} \ar^(0.65){m}[r] & X_1 \ar^{d_0}[r]<2.5ex> \ar^{d_1}[r] & X_0
\ar^{s}[l]<2ex>
}
\end{equation}
where $m,d_0,d_1,s$ satisfy the usual axioms of a category (see for instance \cite{Borc} for details). An internal functor is a morphism of diagrams like \eqref{eq-nint-cat} commuting in the obvious way. We denote by $\Cat \clC$ the category of internal categories and internal functors.

The category $\cat{n}(\clC)$ of \nfol categories in $\clC$ is defined inductively by iterating $n$ times the internal category construction. That is, $\cat{1}(\clC)=\Cat$ and, for $n>1$, $\cat{n}(\clC)= \Cat(\cat{n-1}(\clC))$.

When $\clC=\Set$, $\cat{n}(\Set)$ is simply denoted by $\cat{n}$ and called the category of \nfol categories (double categories when $n=2$).

\subsection{Nerve functors}\label{sus-ner-funct}

There is a nerve functor
\begin{equation*}
    N:\Cat\clC \rw \funcat{}{\clC}
\end{equation*}
such that, for $X\in\Cat\clC$
\begin{equation*}
    (N X)_k=
    \left\{
      \begin{array}{ll}
        X_0, & \hbox{$k=0$;} \\
        X_1, & \hbox{$k=1$;} \\
        \pro{X_1}{X_0}{k}, & \hbox{$k>1$.}
      \end{array}
    \right.
\end{equation*}
When no ambiguity arises, we shall sometimes denote $(NX)_k$ by $X_k$ for all $k\geq 0$.

The following fact is well known:
\begin{proposition}\label{pro-ner-int-cat}
    A simplicial object in $\clC$ is the nerve of an internal category in $\clC$ if and only if all the Segal maps are isomorphisms.
\end{proposition}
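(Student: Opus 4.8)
The plan is to prove the two implications separately. The ``only if'' direction is essentially formal: if $X=NY$ for some $Y\in\Cat\clC$, then by the explicit description of the nerve we have $X_0=Y_0$, $X_1=Y_1$ and $X_k=\pro{X_1}{X_0}{k}$ for $k\geq2$, while the maps $\nu_j\colon X_k\to X_1$ of Definition \ref{def-seg-map} are precisely the canonical projections of this iterated pullback. Hence the $k$-th Segal map $\muk$, uniquely determined by $\pr_j\muk=\nu_j$, is the identity of $\pro{X_1}{X_0}{k}$, and in particular an isomorphism; the maps $\mu_0,\mu_1$ are trivially isomorphisms.

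For the ``if'' direction, given $X\in\funcat{}{\clC}$ with all Segal maps invertible, I would first reconstruct an internal category $Y$: set $Y_0=X_0$, $Y_1=X_1$, take $d_0,d_1\colon X_1\to X_0$ to be the two face maps and $s\colon X_0\to X_1$ the degeneracy $s_0$, and define the composition as
$$
m\;=\;\Bigl(\,X_1\tiund{X_0}X_1\ \xrightarrow{\ \mu_2^{-1}\ }\ X_2\ \xrightarrow{\ d_1\ }\ X_1\,\Bigr),
$$
which makes sense because $\mu_2$ is an isomorphism onto the pullback $X_1\tiund{X_0}X_1$ formed along $d_0$ and $d_1$. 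One then checks the axioms of an internal category. The unit laws reduce to the simplicial identities $d_1s_0=d_1s_1=\Id_{X_1}$, once $\mu_2$ is used to identify the relevant sections $X_1\to X_2$ with $s_0$ and $s_1$. Associativity is the crucial point: writing $\mu_3\colon X_3\xrightarrow{\ \sim\ }\pro{X_1}{X_0}{3}$, one shows that both composites $m(m\tiund{X_0}\Id)$ and $m(\Id\tiund{X_0}m)$ arise by postcomposing $\mu_3^{-1}$ with suitable iterated face maps $X_3\to X_2\to X_1$, and that these two iterations coincide by the simplicial identity $d_1d_1=d_1d_2$ on $X_3$; this is a diagram chase relating $\mu_2$, $\mu_3$ and the faces and degeneracies in low degrees.

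Finally, to conclude that $X\cong NY$ I would check that the family $\{\muk\colon X_k\to(NY)_k\}_{k\geq0}$ is an isomorphism of simplicial objects. Each $\muk$ is an isomorphism by hypothesis, so only naturality in $[k]\in\Delta$ remains, i.e. compatibility with every face and degeneracy map of $X$ and of $NY$; by the cosimplicial identities this reduces to compatibility with the $\nu_j$ and with the structure maps $m,s,d_0,d_1$ of $Y$, which is exactly what the previous step establishes. I expect the main obstacle to be precisely this bookkeeping together with the associativity chase: neither step is conceptually deep, but carrying all identifications carefully through $\mu_2$ and $\mu_3$ is where the real work lies. The result itself is classical and a reference such as \cite{Borc} could also be cited.
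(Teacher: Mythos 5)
The paper offers no proof of this proposition at all: it is stated as a well-known fact (with \cite{Borc} as the ambient reference for internal category theory), so there is nothing to compare your argument against line by line. Your outline is the standard proof and is correct. The ``only if'' direction is indeed immediate from the definition of $N$, since for $X=NY$ the maps $\nu_j$ are the pullback projections and $\mu_k$ is the identity. In the converse direction your choices are the right ones: $m=d_1\circ\mu_2^{-1}$, units from $d_1s_0=d_1s_1=\Id$, associativity from $d_1d_1=d_1d_2$ on $X_3$ transported through $\mu_3$ (here one also uses that $\mu_3$ identifies the two face maps $d_0,d_3\colon X_3\to X_2$ with the maps $\Id\tiund{X_0}\mu_2$ and $\mu_2\tiund{X_0}\Id$ followed by projections --- that is the only place where a careless chase could go wrong, so spell it out). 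For the final step, note that compatibility of the $\mu_k$ with each face and degeneracy can be checked after postcomposing with the projections $\pr_j$ of the limit $(NY)_k$, which reduces everything to identities of the form $\nu_j\,d_i=\cdots$ and $\nu_j\,s_i=\cdots$ in $X$; this is the cleanest way to organize the bookkeeping you anticipate. Since the author simply cites this as classical, your level of detail is already more than the paper supplies.
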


By iterating the nerve construction, we obtain the multinerve functor
\begin{equation*}
    \Nn :\cat{n}(\clC)\rw \funcat{n}{\clC}\;.
\end{equation*}
\begin{definition}\label{def-discrete-nfold}
An internal $n$-fold category $X\in \cat{n}(\clC)$ is said to be discrete if $\Nn X$ is a constant functor.
\end{definition}
A basic fact about $\cat{n}(\clC)$ is that each of its objects can be considered as an internal category in $\cat{n-1}(\clC)$ in $n$ possible ways, corresponding to the $n$ simplicial directions of its multinerve. To see this, we use the following lemma, which is a straightforward consequence of the definitions.
\begin{lemma}\label{lem-multin-iff}\
\begin{itemize}
      \item [a)] $X\in\funcat{n}{\clC}$ is the multinerve of an \nfol category in $\clC$ if and only if, for all $1\leq r\leq n$ and $[p_1],\ldots,[p_r]\in\Dop$, $p_r\geq 2$
\begin{equation}\label{eq-multin-iff}
\begin{split}
    &  X(p_1,\ldots,p_r,\mi)\cong\\
    &\cong\pro{X(p_1,\ldots,p_{r-1},1,\mi)}{X(p_1,\ldots,p_{r-1},0,\mi)}{p_r}
\end{split}
\end{equation}
      \item [b)] Let $X\in\cat{n}(\clC)$. For each $1\leq k\leq n$, $[i]\in\Dop$, there is $X_i\up{k}\in\cat{n-1}(\clC)$ with
\begin{equation*}
    \Nb{n-1}X_i\up{k} (p_1,\ldots,p_{n-1})=\Nn X(p_1,\ldots,p_{k-1},i,p_k,\ldots,p_{n-1})
\end{equation*}
    \end{itemize}
\end{lemma}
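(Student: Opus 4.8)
The final statement is Lemma~\ref{lem-multin-iff}, which characterizes multinerves of $n$-fold categories and records that an $n$-fold category can be viewed as an internal category in $\cat{n-1}(\clC)$ in $n$ ways. Let me plan a proof.

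\textbf{The plan} is to argue by induction on $n$, unwinding the inductive definition $\cat{n}(\clC)=\Cat(\cat{n-1}(\clC))$ together with the iterated nerve construction. The two parts are closely linked: part (b) is essentially the statement that $\Nb{n}$ is compatible with reindexing simplicial directions, and part (a) is the image characterization of $\Nb{n}$, which one proves by combining Proposition~\ref{pro-ner-int-cat} (the $n=1$ case) with the inductive hypothesis.

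\textbf{Step 1 (base case).} For $n=1$, part (a) is exactly Proposition~\ref{pro-ner-int-cat}: a simplicial object $X$ is the nerve of an internal category iff all Segal maps $X(p_1)\xrw{\sim}\pro{X(1)}{X(0)}{p_1}$ are isomorphisms, i.e. iff \eqref{eq-multin-iff} holds for $r=1$. Part (b) for $n=1$ is vacuous (there is only $k=1$, and $X_i\up{1}$ is just the object $X(i)\in\clC=\cat{0}(\clC)$, with the displayed identity being a tautology).

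\textbf{Step 2 (part (b), inductive step).} Assume the lemma for $n-1$. Given $X\in\cat{n}(\clC)=\Cat(\cat{n-1}(\clC))$, it is by definition an internal category object $\tens{X_1}{X_0}\rightrightarrows X_1\rightrightarrows X_0$ with $X_0,X_1\in\cat{n-1}(\clC)$; applying the nerve functor $N:\Cat(\cat{n-1}(\clC))\rw\funcat{}{\cat{n-1}(\clC)}$ gives a simplicial object $[i]\mapsto X_i\up{1}$ in $\cat{n-1}(\clC)$, and composing with $\Nb{n-1}$ levelwise and unraveling $\Nb{n}$ shows $\Nb{n-1}X_i\up{1}(p_1,\ldots,p_{n-1})=\Nb{n}X(i,p_1,\ldots,p_{n-1})$. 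This handles $k=1$. For $2\le k\le n$, use the isomorphism $\xi_k$ of Notation~\ref{not-simp} that permutes the $k$-th simplicial direction to the front: one checks that $\xi_k$ carries multinerves of $n$-fold categories to multinerves of $n$-fold categories (this is where one verifies $\cat{n}(\clC)$ really does have $n$ internal-category structures), so $\xi_k(\Nb{n}X)$ is again a multinerve, its value at $([i],[p_1],\ldots,[p_{n-1}])$ is $\Nb{n}X(p_1,\ldots,p_{k-1},i,p_k,\ldots,p_{n-1})$, and the $k=1$ case applied to the corresponding $n$-fold category yields the desired $X_i\up{k}\in\cat{n-1}(\clC)$.

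\textbf{Step 3 (part (a), inductive step).} For the ``only if'' direction: if $X=\Nb{n}Y$ with $Y\in\cat{n}(\clC)$, then for $r=n$ the isomorphism \eqref{eq-multin-iff} is the $n$-th Segal condition for the internal category $Y$ in $\cat{n-1}(\clC)$, evaluated at the multilevel $(p_1,\ldots,p_{n-1})$; for $1\le r\le n-1$, apply the inductive hypothesis (part (a), dimension $n-1$) to each $Y_i\up{n}\in\cat{n-1}(\clC)$ supplied by part (b), noting that $X(p_1,\ldots,p_r,-)$ with the last index of $Y$ fixed at various values is a multinerve in dimension $n-1$ — here one must be a little careful about which simplicial direction is being tested, but the $\xi_i$ bookkeeping from Step 2 reduces everything to the last-coordinate case. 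For the ``if'' direction: given $X\in\funcat{n}{\clC}$ satisfying all the isomorphisms \eqref{eq-multin-iff}, view $X$ via $\xi_n$ as a simplicial object $[i]\mapsto X_i^{(n)}$ in $\funcat{n-1}{\clC}$; the isomorphisms with $r\le n-1$ say each $X_i^{(n)}$ satisfies the dimension-$(n-1)$ hypotheses, hence by induction is the multinerve of some $Z_i\in\cat{n-1}(\clC)$, and the isomorphism with $r=n$ says the Segal maps of the simplicial object $[i]\mapsto Z_i$ are isomorphisms, so by Proposition~\ref{pro-ner-int-cat} (applied internally to $\cat{n-1}(\clC)$) this simplicial object is the nerve of an internal category in $\cat{n-1}(\clC)$, i.e. an object of $\cat{n}(\clC)$ whose multinerve is $X$. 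One also checks functoriality (morphisms go to morphisms), which is routine from the universal property of limits.

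\textbf{Main obstacle.} The genuinely fiddly point is the bookkeeping in Step 2/Step 3 showing that reindexing via $\xi_i$ sends multinerves to multinerves and that the ``internal category in $\cat{n-1}(\clC)$ in the $k$-th direction'' structures are mutually compatible — i.e. that the iterated limits defining \eqref{eq-multin-iff} for different values of $r$ commute with one another. This commutativity of limits over different simplicial directions is what makes the statement ``an $n$-fold category is an internal category in $(n-1)$-fold categories in $n$ ways'' true, and it is exactly the kind of thing the lemma calls ``a straightforward consequence of the definitions'' but which requires one to chase the universal properties carefully; everything else follows mechanically from Proposition~\ref{pro-ner-int-cat} and induction.
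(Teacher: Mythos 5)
Your proposal is correct and rests on the same engine as the paper's proof: induction on $n$ with Proposition \ref{pro-ner-int-cat} as the base case, peeling off one simplicial direction at a time. There are, however, two organizational differences worth noting. First, the paper peels off the \emph{first} coordinate, which matches the definition of the multinerve ($(\Nb{n}X)(p_1,\ldots,p_r,\mi)=\Nb{n-1}X_{p_1}(p_2,\ldots,p_r,\mi)$): the $r=1$ condition is then exactly the outer Segal condition of $X\in\Cat(\cat{n-1}(\clC))$, and the $r\geq 2$ conditions are inherited verbatim from the inductive hypothesis applied to $X_{p_1}$, so no reindexing is ever needed in part (a). You peel off the \emph{last} coordinate via $\xi_n$, which is mathematically fine but forces an appeal to the symmetry of $\Dnop$ to reconcile the resulting object of $\Cat(\cat{n-1}(\clC))$ with the paper's convention. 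Second, and more substantially, the paper derives part (b) entirely from part (a) by a restriction argument: evaluating the isomorphisms \eqref{eq-multin-iff} at $p_k=i$ shows that the $(n-1)$-simplicial object $(p_1,\ldots,p_{n-1})\mapsto \Nb{n}X(p_1,\ldots,p_{k-1},i,p_k,\ldots,p_{n-1})$ itself satisfies the hypotheses of part (a), so the ``if'' direction of (a) hands you $X_i\up{k}$ directly. Your route — $k=1$ by hand, then transport via the permutation $\xi_k$ — requires checking that the conditions \eqref{eq-multin-iff} are stable under permuting coordinates, which is precisely the ``main obstacle'' you flag. That check does go through (each condition merely asserts that the Segal maps in one fixed direction are isomorphisms, naturally in all remaining variables, which is a permutation-equivariant statement), but the paper's restriction argument sidesteps it entirely; if you keep your route, you should make that equivariance observation explicit rather than leaving it as acknowledged bookkeeping.
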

\begin{proof}\

  a) By induction on $n$. By Proposition \ref{pro-ner-int-cat}, it is true for $n=1$. Suppose it holds for $n-1$ and let $X\in\Cat(\cat{n-1}(\clC))$ with objects of objects (resp. arrows) $X_0$ (resp. $X_1$); denote $(NX)_p=X_p$. By definition of the multinerve
      \begin{equation*}
        (\Nb{n} X)(p_1,\ldots,p_r,\mi)=\Nb{n-1}X_{p_1}(p_2,\ldots,p_r,\mi)\;.
      \end{equation*}
      Hence using the induction hypothesis
\begin{flalign*}
       &\Nb{n}X(p_1...p_r\,\mi)=\Nb{n-1}X_{p_1}(p_2... p_r\,\mi)\cong&
\end{flalign*}
\begin{equation*}
\resizebox{1.0\hsize}{!}{$
\cong \pro{\Nb{n-1} X_{p_1}(p_2... p_{r-1}\,1\,\mi)}
         {\Nb{n-1} X_{p_1}(p_2... p_{r-1}\,0\,\mi)}{p_r}=$}
\end{equation*}
\begin{equation*}
\resizebox{.97\hsize}{!}{
          $=\pro{\Nb{n} X(p_1... p_{r-1}\,1\,\mi)}
         {\Nb{n} X(p_1... p_{r-1}\,0\,\mi)}{p_r}.$} \hspace{10mm}
\end{equation*}
Conversely, suppose $X\in\funcat{n}{\clC}$ satisfies \eqref{eq-multin-iff}. Then for each $[p]\in\Dop$, $X(p,\mi)$ satisfies \eqref{eq-multin-iff}, hence
\begin{equation*}
    X(p,\mi)=\Nb{n-1}X_p
\end{equation*}
for $X_p\in\cat{n-1}(\clC)$. Also, by induction hypothesis
\begin{equation*}
    X(p,\mi)=\pro{X(1,\mi)}{X(0,\mi)}{p}\;.
\end{equation*}
Thus we have the object $X\in\cat{n}(\clC)$ with objects $X_0$, arrows $X_1$ and $X_p=X(p,\mi)$ as above.

\bigskip
b) By part a), there is an isomorphism for $p_r\geq 2$
\begin{flalign*}
       &\Nb{n}X(p_1...p_n)=&
\end{flalign*}
\begin{equation*}
\resizebox{1.0\hsize}{!}{$\pro{\Nb{n}X(p_1...p_{r-1}\, 1 ...p_n)}{\Nb{n}X(p_1...p_{r-1}\, 0 ...p_n)}{p_r}$}\;.
\end{equation*}
In particular, evaluating this at $p_k=i$, this is saying the $(n-1)$-simplicial group taking $(p_1...p_n)$ to $\Nb{n}X(p_1...p_{k-1}\, i ...p_{n-1})$ satisfies condition \eqref{eq-multin-iff} in part a). Hence by part a) there exists $X_i\up{k}$ with
\begin{equation*}
    \Nb{n-1}X_i\up{k}(p_1...p_{n-1})=\Nb{n}X(p_1...p_{k-1}\, i ...p_{n-1})
\end{equation*}
as required.
\end{proof}
\begin{proposition}\label{pro-assoc-iso}
    For each $1\leq k\leq n$ there is an isomorphism $\xi_k:\cat{n}(\clC)\rw \cat{n}(\clC)$ which associates to $X=\cat{n}(\clC)$ an object $\xi_k X$ of $\Cat(\cat{n-1}(\clC))$ with
    \begin{equation*}
        (\xi_k X)_i=X_i\up{k}\qquad i=0,1
    \end{equation*}
    with $X_i\up{k}$ as in Lemma \ref{lem-multin-iff}.
\end{proposition}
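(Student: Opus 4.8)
The plan is to build the isomorphism $\xi_k$ directly on multinerves, using Lemma \ref{lem-multin-iff} to go back and forth between $n$-fold categories and those $n$-simplicial objects satisfying the Segal conditions \eqref{eq-multin-iff}. First I would recall that an object $Y\in\Cat(\cat{n-1}(\clC))$ is determined by the internal category diagram $\tens{Y_1}{Y_0}\rightrightarrows Y_1\rightrightarrows Y_0$ in $\cat{n-1}(\clC)$ together with its structure maps; equivalently, by Proposition \ref{pro-ner-int-cat} applied in $\cat{n-1}(\clC)$, by the simplicial object $\xi_1 Y\in\funcat{}{\cat{n-1}(\clC)}$ whose Segal maps are all isomorphisms. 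So defining $\xi_k X$ amounts to specifying, for each $[i]\in\Dop$, the object $(\xi_k X)_i=X_i\up{k}\in\cat{n-1}(\clC)$ of Lemma \ref{lem-multin-iff}(b), together with face, degeneracy and composition maps, and checking the Segal condition in the $k$-th direction.

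Next I would carry out the construction as follows. Given $X\in\cat{n}(\clC)$, form its multinerve $\Nn X\in\funcat{n}{\clC}$ and, via Notation \ref{not-simp}, apply the isomorphism $\xi_k:\funcat{n}{\clC}\rw\funcat{}{\funcat{n-1}{\clC}}$ of the simplicial category to regard it as a simplicial object in $\funcat{n-1}{\clC}$. By Lemma \ref{lem-multin-iff}(b), each level $(\xi_k\Nn X)_i$ is the multinerve of an object $X_i\up{k}\in\cat{n-1}(\clC)$, so $\xi_k\Nn X$ lands in the full subcategory of $\funcat{}{\funcat{n-1}{\clC}}$ on simplicial objects that are levelwise multinerves; applying $\ovl{N_{(n-1)}^{-1}}$ in the sense of Definition \ref{def-fun-smacat} gives a simplicial object $\xi_k X\in\funcat{}{\cat{n-1}(\clC)}$. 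Then I would verify that the Segal maps of $\xi_k X$ are isomorphisms: by construction the Segal map in level $k$ of $\xi_k X$ is, after applying $N_{(n-1)}$ and unwinding, exactly the isomorphism \eqref{eq-multin-iff} for $\Nn X$ with $r$ chosen so that the $k$-th coordinate is the distinguished one, and since $N_{(n-1)}$ is fully faithful (it is injective on objects and morphisms and its image is characterized by Lemma \ref{lem-multin-iff}(a)) this forces the Segal maps of $\xi_k X$ to be isomorphisms in $\cat{n-1}(\clC)$. By Proposition \ref{pro-ner-int-cat} this says $\xi_k X\in\Cat(\cat{n-1}(\clC))=\cat{n}(\clC)$, with $(\xi_k X)_i=X_i\up{k}$ for $i=0,1$ as required.

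To see $\xi_k$ is an isomorphism of categories, I would note that it is induced on multinerves by the self-isomorphism of $\funcat{n}{\clC}$ permuting the $k$-th simplicial coordinate into first position (strictly, the composite of the simplicial $\xi_k$ with the standard identification $\funcat{}{\funcat{n-1}{\clC}}\cong\funcat{n}{\clC}$), which is an isomorphism of categories preserving the subcategory cut out by \eqref{eq-multin-iff}; its inverse is the analogous permutation. Functoriality on morphisms is immediate since an internal functor in $\cat{n}(\clC)$ is exactly a map of multinerves commuting with structure maps, and the coordinate permutation takes such maps to such maps. I expect the only real bookkeeping obstacle to be the careful matching of indices in \eqref{eq-multin-iff}: one must check that the Segal condition of $\xi_k X$ in direction $1$ corresponds precisely to condition \eqref{eq-multin-iff} for $\Nn X$ with the $k$-th entry singled out, and that the remaining Segal conditions of $\xi_k X$ (in directions $2,\dots,n$, which encode that each $(\xi_k X)_i$ is genuinely in $\cat{n-1}(\clC)$) follow from the corresponding conditions for $\Nn X$; this is exactly the content of Lemma \ref{lem-multin-iff}(b), so no new idea is needed, only a patient identification of simplicial coordinates.
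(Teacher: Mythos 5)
Your proposal is correct and follows essentially the same route as the paper: both construct $\xi_k X$ by viewing the multinerve of $X$ as a simplicial object in $\funcat{n-1}{\clC}$ along the $k$-th coordinate, invoke Lemma \ref{lem-multin-iff}(b) to identify each level with the multinerve of some $X_i\up{k}\in\cat{n-1}(\clC)$ and Lemma \ref{lem-multin-iff}(a) to get the Segal isomorphisms, and obtain the inverse by reassembling the $n$-simplicial object with the distinguished index reinserted in the $k$-th position. The only cosmetic difference is that you phrase the inverse as a coordinate permutation preserving the subcategory cut out by \eqref{eq-multin-iff}, whereas the paper writes it out explicitly as $\xi'_k$; these are the same construction.
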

\begin{proof}
Consider the object of $\funcat{}{\funcat{n-1}{\clC}}$ taking $i$ to the $\nm$-simplicial object associating to $(\seqc{p}{1}{n-1})$ the object
\begin{equation*}
    \Nb{n}X(p_1...p_{k-1}\, i \, p_{k+1}...p_{n-1})\;.
\end{equation*}
By Lemma \ref{lem-multin-iff} b), the latter is the multinerve of $X_i\up{k}\in \cat{n-1}(\clC)$. Further, by Lemma \ref{lem-multin-iff} a), we have
\begin{equation*}
    \Nb{n-1}X_i\up{k}\cong \pro{\Nb{n-1}X_1\up{k}}{\Nb{n-1}X_0\up{k}}{i}\;.
\end{equation*}
Hence $\Nb{n}X$ as a simplicial object in $\funcat{n-1}{\clC}$ along the $k^{th}$ direction, has
\begin{equation*}
    (\Nb{n}X)_i=
    \left\{
      \begin{array}{ll}
        \Nb{n-1}X_i\up{k}, & \hbox{$i=0,1$;} \\
        \Nb{n-1}(\pro{X_1\up{k}}{X_0\up{k}}{i}), & \hbox{for $i\geq 2$.}
      \end{array}
    \right.
\end{equation*}
This defines $\xi_k X\in\Cat(\cat{n-1}(\clC))$ with
\begin{equation*}
    (\xi_k X)_i=\Nb{n-1}X_i\up{k}\qquad i=0,1\;.
\end{equation*}
We now define the inverse for $\xi_k$. Let $X\in\Cat(\cat{n-1}(\clC))$, and let $X_i=\pro{X_1}{X_0}{i}$ for $i\geq 2$. The $n$-simplicial object $X_{k}$ taking $(p_1,\ldots,p_n)$ to
\begin{equation*}
    \Nb{n}X_{p_k}(p_1...p_{k-1}p_{k+1}...p_n)
\end{equation*}
satisfies condition \eqref{eq-multin-iff}, as easily seen. Hence by Lemma \ref{lem-multin-iff} there is $\xi'_k X\in\cat{n}\clC$ such that $\Nb{n }\xi'_k X=X_k$. It is immediate to check that $\xi_k$ and $\xi'_k$ are inverse bijections.
\end{proof}
\begin{definition}\label{def-ner-func-dirk}
    The nerve functor in the $k^{th}$ direction is defined as the composite
    \begin{equation*}
        \Nu{k}:\cat{n}(\clC)\xrw{\xi_k}\Cat(\cat{n-1}(\clC))\xrw{N}\funcat{}{\cat{n-1}(\clC)}
    \end{equation*}
    so that, in the above notation,
    \begin{equation*}
        (\Nu{k}X)_i=X\up{k}_i\qquad i=0,1\;.
    \end{equation*}
    Note that $\Nb{n}=\Nu{n}...\Nu{2}\Nu{1}$.
\end{definition}
\begin{notation}\label{not-ner-func-dirk}
    When $\clC=\Set$ we shall denote
    \begin{equation*}
        J_n=\Nu{n-1}\ldots \Nu{1}:\cat{n}\rw\funcat{n-1}{\Cat}\;.
    \end{equation*}
\end{notation}
Thus $J_n$ amounts to taking the nerve construction in all but the last simplicial direction. The functor $J_n$ is fully faithful, thus we can identify $\cat{n}$ with the image $J_n(\cat{n})$ of the functor $J_n$.

Given $X\in\cat{n}$, when no ambiguity arises we shall denote, for each $(s_1,\ldots ,s_{n-1})\in\Dmenop$
\begin{equation*}
    X_{s_1,\ldots ,s_{n-1}}=(J_n X)_{s_1,\ldots ,s_{n-1}}\in\Cat
\end{equation*}
and more generally, if $1\leq j \leq n-1$,
\begin{equation*}
    X_{s_1,\ldots ,s_{j}}=(\Nu{j}\ldots \Nu{1} X)_{s_1,\ldots ,s_{j}}\in\cat{n-j}\;.
\end{equation*}
Let $ob : \Cat \clC \rw \clC$ be the object of object functor. The left adjoint to $ob$ is the discrete internal category functor $d$. By Proposition \ref{pro-assoc-iso}  we then have
\begin{equation*}
\xymatrix{
    \cat{n}\clC \oset{\xi_n}{\cong}\Cat(\cat{n-1}\clC) \ar@<1ex>[r]^(0.65){ob} & \cat{n-1}\clC \ar@<1ex>[l]^(0.35){d}\;.
}
\end{equation*}
We denote
\begin{equation*}
\di{n}=\xi^{-1}_{n}\circ d \text{\;\;for\;\;} n>1,\; \di{1}=d \;.
\end{equation*}
Thus $\di{n}$ is the discrete inclusion of $\cat{n-1}\clC$ into $\cat{n}\clC$ in the $n^{th}$ direction.

\subsection{Some functors on $ \Cat$}\label{sbs-funct-cat}

The connected component functor
\begin{equation*}
    q: \Cat\rw \Set
\end{equation*}
associates to a category its set of paths components. This is left adjoint to the discrete category functor
\begin{equation*}
    \di{1}:\Set \rw \Cat
\end{equation*}
associating to a set $X$ the discrete category on that set. We denote by
\begin{equation*}
    \zgu{1}:\Id\Rw \di{1}q
\end{equation*}
the unit of the adjunction $q\dashv \di{1}$.
\begin{lemma}\label{lem-q-pres-fib-pro}
    $q$ preserves fiber products over discrete objects and sends
    equivalences of categories to isomorphisms.
\end{lemma}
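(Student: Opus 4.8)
The plan is to handle the two assertions separately, both by direct inspection of the functor $q:\Cat\rw\Set$, which sends a category $\clA$ to $\pi_0\clA$, the quotient of $\Obj\clA$ by the equivalence relation generated by ``there is an arrow between the two objects.''

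For the preservation of fiber products over discrete objects, suppose we are given a cospan $\clA\xrw{f}\di{1}S\xlw{g}\clB$ in $\Cat$ with $S$ a set, and form the pullback $\clA\times_{\di{1}S}\clB$. Since $\di{1}S$ is discrete, an object of the pullback is a pair $(a,b)$ with $f(a)=g(b)\in S$, and a morphism $(a,b)\rw(a',b')$ is a pair of morphisms $a\rw a'$ in $\clA$, $b\rw b'$ in $\clB$ (automatically lying over the identity of $S$). First I would observe that, because $S$ is discrete, $\clA$ decomposes as the disjoint union $\coprod_{x\in S}f^{-1}(x)$ of the full subcategories on the fibers, and similarly for $\clB$; hence the pullback decomposes as $\coprod_{x\in S}\big(f^{-1}(x)\times g^{-1}(x)\big)$, an ordinary product of categories fiberwise. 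Then I would use the elementary facts that $\pi_0$ sends coproducts to disjoint unions and finite products to products of sets (the latter because a zig-zag of morphisms in a product category is a pair of zig-zags in the factors), to get
\[
q\Big(\clA\times_{\di{1}S}\clB\Big)\;\cong\;\coprod_{x\in S} \pi_0 f^{-1}(x)\times\pi_0 g^{-1}(x)\;\cong\;\pi_0\clA\times_{S}\pi_0\clB\;=\;q\clA\times_{qS}q\clB,
\]
using $q(\di{1}S)=S$ and the matching decompositions $\pi_0\clA\cong\coprod_x\pi_0 f^{-1}(x)$, $\pi_0\clB\cong\coprod_x\pi_0 g^{-1}(x)$ on the right-hand side. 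One checks this isomorphism is the canonical comparison map, so $q$ preserves the fiber product.

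For the second assertion, if $F:\clA\rw\clB$ is an equivalence of categories then it is essentially surjective, so every object of $\clB$ is connected (by an isomorphism) to one in the image of $F$; hence $q(F):\pi_0\clA\rw\pi_0\clB$ is surjective. It is injective because if $F(a),F(a')$ lie in the same component of $\clB$, a zig-zag between them can be lifted through the fully faithful $F$ (each arrow $F(a_i)\rw F(a_{i+1})$ or its reverse comes from a unique arrow $a_i\rw a_{i+1}$ in $\clA$), exhibiting $a,a'$ in the same component of $\clA$. Thus $q(F)$ is a bijection. Alternatively, and more slickly, one notes $q=\pi_0$ is homotopy-invariant for the natural weak equivalences on $\Cat$ and an equivalence of categories is in particular such a weak equivalence; but the hands-on argument above is self-contained.

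The only mildly delicate point — the ``main obstacle,'' such as it is — is the first isomorphism: one must be careful that it is the \emph{canonical} comparison morphism $q(\clA\times_{\di{1}S}\clB)\rw q\clA\times_{qS}q\clB$ induced by the two projections that is an isomorphism, not merely that the two sides are abstractly isomorphic. This is handled by chasing the decomposition indexed by $x\in S$ through the projections and checking compatibility on representatives; everything else is routine.
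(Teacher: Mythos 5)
Your proof is correct and follows essentially the same route as the paper: both decompose the pullback over a discrete object as $\coprod_{x}\clC_x\times\clD_x$ and use that $q$ preserves coproducts and finite products (the latter via the zig-zag argument). One minor imprecision in your injectivity argument for the second claim: the intermediate objects of a zig-zag between $F(a)$ and $F(a')$ in $\clB$ need not lie in the image of $F$, so you must first replace them by isomorphic objects of the form $F(a_i)$ using essential surjectivity before lifting the arrows by fullness; the paper sidesteps this by arguing directly from the natural isomorphisms $FG\cong\Id$ and $GF\cong\Id$.
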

\begin{proof}
We claim that $q$ preserves products; that is, given categories
$\clC$ and $\clD$, there is a bijection
\begin{equation*}
    q(\clC\times \clD)=q(\clC)\times q(\clD)\;.
\end{equation*}
In fact, given $(c,d)\in q(\clC\times \clD)$ the map
$q(\clC\times\clD)\rw q(\clC)\times q(\clD)$ given by
$[(c,d)]=([c],[d])$ is well defined and is clearly surjective. On
the other hand, this map is also injective: given $[(c,d)]$ and
$[(c',d')]$ with $[c]=[c']$ and $[d]=[d']$, we have paths in $\clC$
\newcommand{\lin}{-\!\!\!-\!\!\!-}
\begin{equation*}
\xymatrix @R5pt{c \hspace{2mm} \lin \hspace{2mm}\cdots \hspace{2mm}
\lin
\hspace{2mm}c'\\
d \hspace{2mm} \lin \hspace{2mm}\cdots \hspace{2mm} \lin
\hspace{2mm}d' }
\end{equation*}
and hence a path in $\clC\times \clD$
\begin{equation*}
(c,d)\hspace{2mm}\lin\hspace{2mm}\cdots\hspace{2mm}\lin\hspace{2mm}(c',d)
\hspace{2mm}\lin\hspace{2mm}\cdots\hspace{2mm}\lin\hspace{2mm}(c',d')\;.
\end{equation*}
Thus $[(c,d)]=[(c',d')]$ and so the map is also injective, hence it
is a bijection, as claimed.

Given a diagram in $\Cat$ $\xymatrix{\clC\ar_{f}[r] & \clE & \clD
\ar^{g}[l]}$ with $\clE$ discrete, we have
\begin{equation}\label{eq-q-pres-fib-pro}
    \clC\tiund{\clE}\clD=\underset{x\in\clE}{\coprod}\clC_x\times
    \clD_x
\end{equation}
where $\clC_x,\;\clD_x$ are the full subcategories of $\clC$ and
$\clD$ with objects $c,\;d$ such that \;$f(c)=x=g(d)$. Since $q$ preserves
products and (being left adjoint) coproducts, we conclude by
\eqref{eq-q-pres-fib-pro} that
\begin{equation*}
    q(\clC\tiund{\clE}\clD)\cong q(\clC)\tiund{\clE}\,q(\clD)\;.
\end{equation*}
Finally, if $F:\clC\simeq \clD:G$ is an equivalence of categories,
$FG\,\clC\cong\clC$ and $FG\,\clD\cong \clD$ which implies that
$qF\,qG\,\clC\cong q\clC$ and $qF\,qG\,\clD\cong q\clD$, so $q\clC$
and $q\clD$ are isomorphic.
\end{proof}
\bk
The isomorphism classes of objects functor
\begin{equation*}
    p:\Cat\rw\Set
\end{equation*}
associates to a category the set of isomorphism classes of its objects. Notice that if $\clC$ is a groupoid, $p\clC=q\clC$.
\begin{lemma}\label{lem-p-pres-fib-pro}
    $p$ preserves pullbacks over discrete objects and sends
    equivalences of categories to isomorphisms.
\end{lemma}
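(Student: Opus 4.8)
The plan is to mirror the proof of Lemma \ref{lem-q-pres-fib-pro}, replacing the path-component argument by the (simpler) description of isomorphisms in a product category. First I would check that $p$ preserves finite products: a morphism $(c,d)\rw(c',d')$ in $\clC\times\clD$ is an isomorphism exactly when each of its two components is an isomorphism in the respective factor, so $(c,d)\cong(c',d')$ in $\clC\times\clD$ precisely when $c\cong c'$ in $\clC$ and $d\cong d'$ in $\clD$; this gives a natural bijection $p(\clC\times\clD)\cong p(\clC)\times p(\clD)$, and in contrast with the case of $q$ no connecting-path argument is needed.

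Next I would record that $p$ preserves coproducts: in $\clC\cop\clD$ there are no morphisms between an object of $\clC$ and an object of $\clD$, so every isomorphism class lies in a single summand and $p(\clC\cop\clD)\cong p(\clC)\cop p(\clD)$; since $p$ is not visibly a left adjoint, this replaces the appeal to adjointness made for $q$. Then, for a diagram $\xymatrix{\clC\ar_{f}[r] & \clE & \clD \ar^{g}[l]}$ with $\clE$ discrete, I would reuse the decomposition \eqref{eq-q-pres-fib-pro}, namely $\clC\tiund{\clE}\clD=\coprod_{x\in\clE}\clC_x\times\clD_x$ where $\clC_x,\clD_x$ are the full subcategories of objects lying over $x$, and combine the two facts just established to obtain
\begin{equation*}
    p(\clC\tiund{\clE}\clD)\cong\coprod_{x\in\clE}p(\clC_x)\times p(\clD_x)\cong p(\clC)\tiund{\clE}p(\clD)\;,
\end{equation*}
so $p$ preserves pullbacks over discrete objects.

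Finally, for an equivalence $F:\clC\rw\clD$ with quasi-inverse $G$, the natural isomorphisms $GF\cong\Id_\clC$ and $FG\cong\Id_\clD$ give $[GFc]=[c]$ for every object $c$ and likewise on $\clD$, whence $p(G)p(F)=\Id_{p\clC}$, $p(F)p(G)=\Id_{p\clD}$ and $p(F)$ is a bijection; equivalently, essential surjectivity of $F$ makes $p(F)$ surjective and fullness plus faithfulness makes it injective. I do not anticipate a genuine obstacle: the argument is routine and strictly easier than that of Lemma \ref{lem-q-pres-fib-pro}, the only points requiring a little care being the direct verification of coproduct-preservation (one cannot quote an adjunction as for $q$) and the identification of the fibers $\clC_x$ with the connected pieces of $\clC$ over the discrete category $\clE$.
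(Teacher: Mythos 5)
Your proof is correct, but it takes a different route from the paper's. The paper factors $p$ through the connected-components functor: writing $m\clC$ for the maximal sub-groupoid of $\clC$, it observes that $p=q\circ m$, that $m$ commutes with products and coproducts and fixes discrete categories, and then simply quotes Lemma \ref{lem-q-pres-fib-pro} applied to $m(\clC\tiund{\clE}\clD)=m\clC\tiund{\clE}m\clD$; the equivalence statement is handled by the same quasi-inverse argument you give. You instead verify everything directly for $p$: product preservation follows from the fact that an isomorphism in $\clC\times\clD$ is exactly a pair of isomorphisms (which, as you note, is genuinely easier than the connecting-path argument needed for $q$), coproduct preservation is checked by hand since $p$ is not presented as a left adjoint, and the same decomposition $\clC\tiund{\clE}\clD=\coprod_{x\in\clE}\clC_x\times\clD_x$ finishes the pullback claim. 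Both arguments are sound; the paper's is shorter on the page because it reuses the previous lemma (at the cost of the unproved but easy assertion that $m$ commutes with (co)products), while yours is more self-contained and avoids the path argument entirely. One cosmetic remark: for injectivity of $p(F)$ in the second part, fullness already suffices (a full and faithful functor reflects isomorphisms), but your main argument via the quasi-inverse $G$ is complete and is essentially what the paper does.
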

\begin{proof}
For a category $\clC$, let $m\clC$ be its maximal sub-groupoid. Then $p\clC=qm\clC$. Given a diagram in $\Cat$ $\xymatrix{\clC\ar_{f}[r] & \clE & \clD
\ar^{g}[l]}$ with $\clE$ discrete, we have
\begin{equation*}
    \clC\tiund{\clE}\clD=\underset{x\in\clE}{\coprod}\clC_x\times
    \clD_x\;.
\end{equation*}
Since, as easily seen, $m$ commutes with (co)products, and $m\clE=\clE$, we obtain $m(\clC\tiund{\clE}\clD)=m\clC\tiund{\clE}m\clD$; so by Lemma \ref{lem-q-pres-fib-pro},
\begin{align*}
    p(\clC\tiund{\clE}\clD)&=qm(\clC\tiund{\clE}\clD)=q(m\clC\tiund{\clE}m\clD)
    =qm\clC\tiund{q\clE}qm\clD=p\clC\tiund{\clE}p\clD\;.
\end{align*}
Finally, if $F:\clC\simeq \clD:G$ is an equivalence of categories, $FG\clC\cong \clC$ and $FG\clD\cong\clD$ which implies that $pF\,pG\,\clC\cong p\clC$ and $qF\,qG\,\clD\cong q\clD$, so $q\clC$ and $q\clD$ are isomorphic.
\end{proof}


\section{Homotopically discrete ${n}$-fold categories}\label{sec-hom-dis-ncat}
In this section we give an inductive definition of the category $\cathd{n}$ of homotopically discrete \nfol categories and of $n$-equivalences between them. We then establish the main properties of this structure.

In Lemma \ref{lem-pos-grou-hom-disc} we show that $\cathd{n}$ can be viewed as a diagram of equivalence relations, while in Lemma \ref{lem-neq-hom-disc} we show that $n$-equivalences in $\cathd{n}$ are detected by isomorphisms of their discretizations.

Together with the good behavior of homotopically discrete \nfol categories with respect to pullbacks over discrete objects (Lemma \ref{lem-copr-hom-disc}), this implies that the induced Segal maps in a homotopically discrete \nfol category are $(n-1)$-equivalences (Proposition \ref{pro-ind-map-hom-disc}). This makes $\cathd{n}$ a Segal-type model in a sense similar to \cite{BP}, see also \cite{Ta}.
\subsection{The category of homotopically discrete $\mathbf{n}$-fold categories.}\label{sbs-cat-nfol-cat}
\begin{definition}\label{def-hom-dis-ncat}
    Define inductively the full subcategory $\cathd{n}\subset\cat{n}$ of homotopically discrete \nfol categories.

    For $n=1$, $\cathd{1}=\cathd{}$ is the category of  equivalence relations that is, groupoids equivalent to discrete ones. Denote by $\p{1}=p:\Cat\rw\Set$ the isomorphism classes of object functor.

    Suppose, inductively, that for each $1\leq k\leq n-1$ we defined $\cathd{k}\subset\cat{k}$  such that the following holds:
    \begin{itemize}
      \item [a)] The $k^{th}$ direction in $\cathd{k}$ is groupoidal; that is, if $X\in\cathd{k}$, $\xi_{k}X\in\Gpd(\cat{k-1})$ (where $\xi_{k}X$ is as in Proposition \ref{pro-assoc-iso}).

      \item [b)] There is a functor $\p{k}:\cathd{k}\rw\cathd{k-1}$ making the following diagram commute:
          \begin{equation}\label{eq-p-fun-def}
            \xymatrix{
            \cathd{k} \ar^{\Nu{k-1}...\Nu{1}}[rrr] \ar_{p^{(k)}}[d] &&& \funcat{k-1}{\Cat} \ar^{\bar p}[d]\\
            \cathd{k-1} \ar_{\N{k-1}}[rrr] &&& \funcat{k-1}{\Set}
            }
          \end{equation}
          Note that this implies that $(\p{k}X)_{s_1 ... s_{k-1}}=p X_{s_1 ... s_{k-1}}$ for all $(s_1 ... s_{k-1})\in\dop{k-1}$

    \end{itemize}

    $\cathd{n}$ is the full subcategory of $\funcat{}{\cathd{n-1}}$ whose objects $X$ are such that
    \bigskip
    \begin{itemize}
      \item [(i)]
          \begin{equation*}
            X_s\cong\pro{X_1}{X_0}{s} \quad \mbox{for all} \; s \geq 2.
          \end{equation*}
    In particular this implies that $X\in \Cat(\Gpd(\cat{n-2})) =\Gpd(\cat{n-1})$ and the $n^{th}$ direction in $X$ is groupoidal.
    \medskip
      \item [(ii)] The functor
      \begin{equation*}
        \op{n-1}:\cathd{n}\subset \funcat{}{\cathd{n-1}}\rw\funcat{}{\cathd{n-2}}
      \end{equation*}
      restricts to a functor
      \begin{equation*}
        \p{n}:\cathd{n}\rw\cathd{n-1}
      \end{equation*}
     Note that this implies that $(\p{n}X)_{s_1 ... s_{n-1}}=p X_{s_1 ... s_{n-1}}$ for all $s_1, ..., s_{n-1}\in\dop{n-1}$ and that the following diagram commutes
          \begin{equation}\label{eq-p-fun-def}
            \xymatrix{
            \cathd{n} \ar^{\Nu{n-1}...\Nu{1}}[rrr] \ar_{p^{(n)}}[d] &&& \funcat{n-1}{\Cat} \ar^{\bar p}[d]\\
            \cathd{n-1} \ar_{\N{n-1}}[rrr] &&& \funcat{n-1}{\Set}
            }
          \end{equation}
     \end{itemize}
\end{definition}
\mk
\begin{definition}\label{def-hom-dis-ncat-1}

    Denote by $\zgu{n}_X:X\rw \di{n}\p{n}X$ the morphism given by
    \begin{equation*}
        (\zgu{n}_X)_{s_1...s_{n-1}} :X_{s_1...s_{n-1}} \rw d p X_{s_1...s_{n-1}}
    \end{equation*}
    for all  $(s_1,...,s_{n-1})\in \dop{n-1}$. Denote by
    \begin{equation*}
        X^d =\di{n}\di{n-1}...\di{1}\p{1}\p{2}...\p{n}X
    \end{equation*}
    and by $\zg\lo{n}$ the composite
    \begin{equation*}
        X\xrw{\zgu{n}}\di{n}\p{n}X \xrw{\di{n}\zgu{n-1}} \di{n}\di{n-1}\p{n-1}\p{n}X \rw \cdots \rw X^d\;.
    \end{equation*}
\end{definition}
\begin{notation}\label{not-fiber}
    Given $X\in\cathd{n}$, for each $a,b\in X_0^d$ denote by $X(a,b)$ the fiber at $(a,b)$ of the map
    \begin{equation*}
        X_1 \xrw{(d_0,d_1)} X_0\times X_0 \xrw{\zg\lo{n}\times\zg\lo{n}} X_0^d\times X_0^d\;.
    \end{equation*}
\end{notation}
\begin{definition}\label{def-hom-dis-ncat-1}
Define inductively $n$-equivalences in $\cathd{n}$. For $n=1$, a 1-equivalence is an equivalence of categories. Suppose we defined $\nm$-equivalences in $\cathd{n-1}$. Then a map $f:X\rw Y$ in $\cathd{n}$ is an $n$-equivalence if, for all $a,b \in X_0^d$, $f(a,b):X(a,b) \rw Y(fa,fb)$ and $\p{n}f$ are $\nm$-equivalences.
\end{definition}
\begin{remark}\label{rem-hom-dis-ncat}
By definition, the functor $\p{n}$ sends $n$-equivalences to $\nm$-equivalences. We observe that $\p{n}$ commutes with pullbacks over discrete objects. In fact, if $X\rw Z \lw Y$ is a diagram in $\cathd{n}$ with $Z$ discrete and $X\tiund{Z}Y\in\cathd{n}$, by Definition \ref{def-hom-dis-ncat}
\begin{align*}
    & (\p{n}(X\tiund{Z}Y))_{s_1...s_{n-1}}=p(X_{s_1...s_{n-1}}\tiund{Z}Y_{s_1...s_{n-1}})=\\
   =\  & p X_{s_1...s_{n-1}}\tiund{p Z} p Y_{s_1...s_{n-1}}=(\p{n}X \tiund{\p{n}Z} \p{n}Y)_{s_1...s_{n-1}}
\end{align*}
where we used the fact (Lemma \ref{lem-p-pres-fib-pro}) that $p$ commutes with pullbacks over discrete objects. Since this holds for each $s_1...s_{n-1}$ we conclude that
\begin{equation*}
    \p{n}(X\tiund{Z}Y) \cong \p{n} X \tiund{\p{n} Z} \p{n} Y\;.
\end{equation*}
\end{remark}
\begin{example}\label{ex-hom-disc-neq-rel}
    Let $X\in\cathd{2}$; then $\p{2}X$ is the equivalence relation $p(X_0)[\zg]$ where $\zg:pX_0\rw X^d$, and $X$ has the form
\tiny{
\begin{equation*}
\xymatrix@C10pt{
& X_{10}\tiund{(\tens{pX_0}{X^d})} X_{10}\tiund{(\tens{pX_0}{X^d})} X_{10} \ar^{}[rr]<1ex> \ar^{}[rr]<-1ex> \ar^{}[d] &&
X_{00}\tiund{pX_0}X_{00}\ar^{}[d]\\
\cdots  \ar^{}[r] \ar_{}[d]<-2ex>\ar^{}[d] &
X_{10}\tiund{(\tens{pX_0}{X^d})}X_{10}\ar^{}[rr]<1ex> \ar^{}[rr]<-1ex>\ar_{}[d]<-2ex>\ar^{}[d] &&
\tens{X_{00}}{pX_{0}}\ar_{}[d]<-2ex>\ar^{}[d]\\
\tens{X_{10}}{X_{00}}\ar_{}[u]<-2ex> & X_{10}\ar_{}[u]<-2ex> && X_{00}\ar_{}[u]<-2ex>
}
\end{equation*}}
\end{example}

\nid The vertical structure is groupoidal and the horizontal  nerve $\Nu{1}X\in\funcat{}{\Cat}$ has in each component an equivalence relation. The horizontal structure is not in general groupoidal; however $\p{2}X$is an equivalence relation. This means that the horizontal arrows in the double category $X$ have inverses after modding out by the double cells. This structure is a special case of what called in \cite{PP} a groupoidal weakly globular double category.

\subsection{Properties of homotopically discrete $\mathbf{n}$-fold categories.}\label{sbs-proper-nfol}
\begin{lemma}\label{lem-pos-grou-hom-disc}
   The functor $J_n:\cat{n}\rw \funcat{n-1}{\Cat}$ restricts to a functor
   \begin{equation*}
    J_n:\cathd{n}\rw \funcat{n-1}{\cathd{}}\;.
   \end{equation*}
\end{lemma}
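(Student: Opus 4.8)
The plan is to argue by induction on $n$, essentially by unwinding the inductive Definition \ref{def-hom-dis-ncat}. For $n=1$ there is nothing to prove: $J_1=\Id$ and $\cathd{1}=\cathd{}$ by definition. So assume the statement for $n-1$, that is, $J_{n-1}$ sends $\cathd{n-1}$ into $\funcat{n-2}{\cathd{}}$.

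Let $X\in\cathd{n}$. By Definition \ref{def-hom-dis-ncat}, $X$ is a simplicial object in $\cathd{n-1}$ satisfying the Segal condition (i); by Proposition \ref{pro-ner-int-cat} it therefore defines an object of $\cat{n}$, and its value $X_{s_1}$ at $[s_1]\in\Dop$ lies in $\cathd{n-1}$. The crucial point is the identity
\begin{equation*}
(J_n X)_{s_1,s_2,\dots,s_{n-1}}\;=\;\bigl(J_{n-1}X_{s_1}\bigr)_{s_2,\dots,s_{n-1}},
\end{equation*}
natural in $(s_1,\dots,s_{n-1})\in\Dmenop$, which I would obtain as follows. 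Since $J_n=\Nu{n-1}\cdots\Nu{1}$ and applying $\Nu{1}$ returns $X$ itself, now regarded as a simplicial object in $\cat{n-1}$, the remaining composite $\Nu{n-1}\cdots\Nu{2}$ acts levelwise on the levels $X_{s_1}\in\cat{n-1}$, where — after the evident reindexing of the $n-1$ directions of an object of $\cat{n-1}$ — it is exactly the functor $J_{n-1}$ of Definition \ref{def-ner-func-dirk}; equivalently $J_n=\overline{J_{n-1}}\circ\Nu{1}$ in the sense of Definition \ref{def-fun-smacat}. This is a routine check using the associativity isomorphisms $\xi_k$ of Proposition \ref{pro-assoc-iso}.

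Granting the identity, the conclusion is immediate: for each fixed $s_1$ the slice $X_{s_1}$ lies in $\cathd{n-1}$, so by the inductive hypothesis $J_{n-1}X_{s_1}\in\funcat{n-2}{\cathd{}}$, hence $(J_nX)_{s_1,\dots,s_{n-1}}=(J_{n-1}X_{s_1})_{s_2,\dots,s_{n-1}}$ is an equivalence relation for every multi-index. Since $\cathd{}$ is a full subcategory of $\Cat$, this says exactly that $J_nX$ factors through $\funcat{n-1}{\cathd{}}$, and functoriality of the restricted $J_n$ is inherited from $J_n$ on $\cat{n}$. The one step carrying any content is the bookkeeping behind the displayed identity; I do not expect it to present a real obstacle, and I note that conditions (i) and (ii) of Definition \ref{def-hom-dis-ncat} play no role here beyond guaranteeing that $X$ is a simplicial object in $\cathd{n-1}$ that lies in $\cat{n}$.
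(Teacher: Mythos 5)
Your proof is correct and follows essentially the same route as the paper's: induction on $n$, using that each level $X_{s_1}$ of $X\in\cathd{n}$ lies in $\cathd{n-1}$ together with the identification $(J_nX)_{s_1,\dots,s_{n-1}}=(J_{n-1}X_{s_1})_{s_2,\dots,s_{n-1}}$, which the paper invokes implicitly through its indexing conventions and you spell out. The only cosmetic differences are that the paper starts the induction at $n=2$ and leaves the bookkeeping identity unstated.
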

\begin{proof}
By induction on $n$. For $n=2$ if $X\in\cathd{2}$ then by definition $X_s\in\cathd{}$ for all $s\geq 0$. Suppose the lemma holds for $n-1$ and let $X\in \cathd{n}$. Then for all $s_1\geq 0$, $X_{s_1}\in\cathd{n-1}$ so by induction hypothesis
\begin{equation*}
    (X_{s_1})_{s_2...s_{n-1}}=X_{s_1...s_{n-1}}\in \cathd{}\;.
\end{equation*}
\end{proof}
\begin{lemma}\label{lem-neq-hom-disc}
    A map $f:X\rw Y$ in $\cathd{n}$ is a $\nequ$ if and only if $X^d \cong Y^d$.
\end{lemma}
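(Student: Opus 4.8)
The plan is to prove both implications by induction on $n$, using the definition of $n$-equivalence (Definition~\ref{def-hom-dis-ncat-1}) together with the fact that $\p{n}$ sends $n$-equivalences to $\nm$-equivalences and commutes with pullbacks over discrete objects (Remark~\ref{rem-hom-dis-ncat}). The base case $n=1$ is classical: an equivalence of categories between equivalence relations induces a bijection on isomorphism classes of objects, i.e.\ $X^d \cong Y^d$; conversely, if $pX \cong pY$ then the induced map on the equivalence relations is fully faithful (since in an equivalence relation there is exactly one morphism between any two objects in the same component, and none otherwise) and essentially surjective, hence an equivalence. For the inductive step I would first unwind $X^d = \di{n}\cdots\di{1}\,\p{1}\cdots\p{n}X$: since the $\di{i}$ are fully faithful inclusions, $X^d \cong Y^d$ is equivalent to $\p{1}\cdots\p{n}X \cong \p{1}\cdots\p{n}Y$ as discrete $n$-fold categories, i.e.\ to an iso of underlying sets compatible with all the structure.

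For the forward direction, suppose $f:X\rw Y$ is an $n$-equivalence. By definition $\p{n}f:\p{n}X\rw\p{n}Y$ is an $\nm$-equivalence in $\cathd{n-1}$, so by the inductive hypothesis $(\p{n}X)^d \cong (\p{n}Y)^d$, and unravelling the definition of $(-)^d$ one more step this says precisely $X^d\cong Y^d$. (Here I would note that $(\p{n}X)^d = \di{n-1}\cdots\di{1}\p{1}\cdots\p{n-1}(\p{n}X)$, which differs from $X^d$ only by the outermost fully faithful $\di{n}$, so the isomorphisms correspond.) This direction is straightforward and uses only one clause of the definition of $n$-equivalence.

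The reverse direction is the main obstacle. Suppose $X^d\cong Y^d$; I must produce an $n$-equivalence $f:X\rw Y$, or rather show the canonical comparison is one. The natural candidate is built from the discretization maps: $X^d \cong Y^d$ should be promoted to a map $X\rw Y$ using that both $X$ and $Y$ sit over their common discretization. Concretely, since $\zg\lo{n}:X\rw X^d$ and similarly for $Y$, and by Lemma~\ref{lem-neq-hom-disc}'s own hypothesis the targets agree, I would argue as follows: first, $X^d \cong Y^d$ forces $(\p{n}X)^d\cong(\p{n}Y)^d$, so by induction $\p{n}X$ and $\p{n}Y$ are $\nm$-equivalent; I need to know this equivalence is realized by an actual map and lifts. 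The cleanest route is to show directly that the discretization map $d:X\rw X^d$ is itself an $n$-equivalence — this is essentially Proposition~\ref{pro-ind-map-hom-disc}'s circle of ideas — so that $X$ is $n$-equivalent to $X^d$ and $Y$ is $n$-equivalent to $Y^d\cong X^d$; composing (and using that $n$-equivalences are closed under composition and contain isomorphisms, and that an $n$-equivalence to a discrete object can be reversed up to $n$-equivalence) gives an $n$-equivalence between $X$ and $Y$. To run this I would check by induction that $d:X\rw X^d$ is an $n$-equivalence: $\p{n}(d)$ is again a discretization map in dimension $n-1$ hence an $\nm$-equivalence by induction, and for $a,b\in X_0^d$ the map on fibers $X(a,b)\rw X^d(a,b)$ has discrete target $X^d(a,b)$ which is either empty or a point, and one checks $X(a,b)$ is a homotopically discrete $\nm$-fold category whose discretization is exactly that point, so again by induction the fiber map is an $\nm$-equivalence.

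The subtlety I expect to wrestle with is making the ``reverse an $n$-equivalence into a discrete object'' step precise: $n$-equivalences are not obviously a calculus of fractions, so from $X\xrw{\sim} X^d \xlw{\sim} Y$ I cannot immediately conclude $X$ is $n$-equivalent to $Y$ via a single map unless I know something like two-out-of-three or that every homotopically discrete $n$-fold category admits a section of its discretization up to $n$-equivalence. I would handle this either by constructing the comparison map $X\rw Y$ explicitly levelwise from the iso $X^d\cong Y^d$ and the pullback descriptions in Definition~\ref{def-hom-dis-ncat}(i) (expressing $X_s$ as an iterated pullback of $X_1$'s over $X_0$ and $X^d_0$, and similarly for $Y$, then transporting along the iso of discretizations), and then verifying on fibers and under $\p{n}$ that it is an $n$-equivalence using the inductive hypothesis; or, if a two-out-of-three statement for $n$-equivalences is available or easily proved from the definition, invoking that. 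Either way the bookkeeping of iterated pullbacks over the discretizations is the technical heart, and Remark~\ref{rem-hom-dis-ncat} (commutation of $\p{n}$ with such pullbacks) and Lemma~\ref{lem-copr-hom-disc} are the tools that keep it under control.
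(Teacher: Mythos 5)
Your forward direction matches the paper's. The problem is the converse, which you have mis-framed: the statement quantifies over a \emph{given} map $f:X\rw Y$, so under the hypothesis $X^d\cong Y^d$ there is nothing to construct and nothing to reverse --- you only have to verify that this particular $f$ satisfies the two clauses of Definition \ref{def-hom-dis-ncat-1}. The paper's argument is short: $X^d\cong Y^d$ means $(\p{n}X)^d\cong(\p{n}Y)^d$, so by the inductive hypothesis $\p{n}f$ is a $\nm$-equivalence (first clause done); then, using that $(\p{n}f)(a,b)=(\p{n-1}f)(a,b)$, one deduces
\begin{equation*}
X(a,b)^d=(\p{n-1}X(a,b))^d\cong(\p{n-1}Y(fa,fb))^d=Y(fa,fb)^d,
\end{equation*}
and a second application of the inductive hypothesis, now to the fiber map $f(a,b):X(a,b)\rw Y(fa,fb)$ of the given $f$, yields the second clause. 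This double use of the inductive hypothesis on the fibers of $f$ itself is the step missing from your write-up: your only discussion of fiber maps concerns the discretization $X\rw X^d$, not $f$.

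The machinery you propose instead would not go through in the paper's logical order. The 2-out-of-3 property for $\nequ$s and the fact that $\zg\lo{n}:X\rw X^d$ is a $\nequ$ are both recorded as \emph{consequences} of this lemma (Remark \ref{rem-neq-hom-disc}), so invoking either here is circular; and the zig-zag $X\rw X^d\cong Y^d\lw Y$ cannot be composed into a single comparison without exactly the inversion step you acknowledge you cannot justify. Your fallback of building a map levelwise from the iterated-pullback description is likewise beside the point, since the comparison map is already supplied by the hypothesis. Once you read the statement as a property of the given $f$, the proof collapses to the two inductive applications above.
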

\begin{proof}
By induction on $n$. For $n=1$, $f$ is a map of equivalence relations, so the statement is is true by Lemma \ref{lem-p-pres-fib-pro}. Suppose the lemma holds for $(n-1)$ and let $f:X\rw Y$ be a $\nequ$ in $\cathd{n}$. Then by definition $\p{n}f$ is a $\nm$-equivalence; therefore by induction hypothesis
\begin{equation*}
    X^d=(\p{n}X)^d=(\p{n}Y)^d=Y^d\;.
\end{equation*}
Conversely, suppose that $f:X\rw Y$ is such that $X^d\cong Y^d$. This is the same as $(\p{n}X)^d=(\p{n}Y)^d$, so, by induction, $\p{n}f$ is a $\nm$-equivalence. This implies that, for each $a,b\in X^d$, $(\p{n}f)(a,b)$ is a $\nm$-equivalence. But
\begin{equation*}
    (\p{n}f)(a,b)=(\p{n-1}f)(a,b)
\end{equation*}
so $(\p{n-1}f)(a,b)$ is a $\nm$-equivalence. This implies that
\begin{equation*}
    X(a,b)^d = (\p{n-1}X(a,b))^d \cong (\p{n-1}Y(fa,fb))^d = Y(fa,fb)^d\;.
\end{equation*}
By induction hypothesis, we deduce that
\begin{equation*}
    f_{(a,b)}:X(a,b) \rw Y(fa,fb)
\end{equation*}
is a $\nm$-equivalence. We conclude that $f$ is a $\nequ$.
\end{proof}
\begin{remark}\label{rem-neq-hom-disc}
    It follows immediately from Lemma \ref{lem-neq-hom-disc} that $\nequ$s in $\cathd{n}$ have the 2-out-of-3 property. In particular this implies that if $X\in\cathd{n}$ the maps $\zg\up{n}:X\rw \di{n}\p{n}X$ and $\zg\lo{n}:X\rw X^d$ are $n$-equivalences.
\end{remark}
\begin{lemma}\label{lem-copr-hom-disc}
    Let $X\xrw{f} Z \xlw{g}Y$ be a diagram in $\cathd{n}$ with $Z$ discrete. Then
    \begin{itemize}
      \item [a)] $X \coprod Y \in \cathd{n}$.\mk

      \item [b)] $X \times Y \in \cathd{n}$.\mk

      \item [c)] $X \tiund{Z} Y \in \cathd{n}$ \quad \text{and} \quad $(X\tiund{Z}Y)^d=X^d\tiund{Z^d}Y^d$.\mk
    \end{itemize}
\end{lemma}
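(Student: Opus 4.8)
The plan is to proceed by induction on $n$, since $\cathd{n}$ is itself defined inductively as a full subcategory of $\funcat{}{\cathd{n-1}}$, and the three operations (coproduct, product, pullback over a discrete object) are computed levelwise in the simplicial direction. For $n=1$ the statement is classical: a coproduct, finite product, or pullback over a discrete object of equivalence relations is again an equivalence relation (since $\Cat$ is a topos-like setting where these are computed componentwise on objects and arrows, and being an equivalence relation is a property of the $(d_0,d_1):X_1\rw X_0\times X_0$ monomorphism-with-the-right-reflexivity/transitivity). The identity $(X\tiund{Z}Y)^d = X^d \tiund{Z^d} Y^d$ in the $n=1$ case is exactly the content of Lemma \ref{lem-p-pres-fib-pro}, that $p$ commutes with pullbacks over discrete objects, together with the fact that $d$ (being a right adjoint to $ob$, hence preserving limits, or by direct inspection) preserves such pullbacks.

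For the inductive step, assume the lemma for $n-1$ and let $X\xrw{f}Z\xlw{g}Y$ be a diagram in $\cathd{n}$ with $Z$ discrete. First I would verify condition (i) of Definition \ref{def-hom-dis-ncat}: writing $W = X\tiund{Z}Y$ (and similarly for $\coprod$, $\times$), we have $W_s = X_s \tiund{Z_s} Y_s$ levelwise, and since $Z$ is discrete, $Z_s \cong Z_0$ is discrete in $\cathd{n-1}$, so by the inductive hypothesis $W_s \in \cathd{n-1}$. The Segal condition $W_s \cong \pro{W_1}{W_0}{s}$ then follows because each of $X, Y, Z$ satisfies it and limits commute with limits — concretely, $X_s\tiund{Z_s}Y_s \cong (\pro{X_1}{X_0}{s})\tiund{\pro{Z_1}{Z_0}{s}}(\pro{Y_1}{Y_0}{s})$, and since $Z$ is discrete $\pro{Z_1}{Z_0}{s}\cong Z_0$, so this rearranges to $\pro{(X_1\tiund{Z_0}Y_1)}{(X_0\tiund{Z_0}Y_0)}{s} = \pro{W_1}{W_0}{s}$. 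For the coproduct the Segal map argument is similar once one notes $(X\coprod Y)_s = X_s \coprod Y_s$ and that, for coproducts, $\pro{(X_1\coprod Y_1)}{(X_0\coprod Y_0)}{s}$ splits as $\pro{X_1}{X_0}{s}\coprod\pro{Y_1}{Y_0}{s}$ plus mixed terms that vanish because a path cannot jump between the two summands. Then condition (ii): by Remark \ref{rem-hom-dis-ncat}, $\p{n}$ commutes with pullbacks over discrete objects, and it clearly commutes with coproducts (being built componentwise from $p$, which is a left adjoint) and with finite products (Lemma \ref{lem-p-pres-fib-pro}); so $\p{n}W = \p{n}X \tiund{\p{n}Z}\p{n}Y$ (resp. the coproduct/product), which lies in $\cathd{n-1}$ by the inductive hypothesis, and the requisite diagram \eqref{eq-p-fun-def} commutes by naturality.

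Finally, for the identity $(X\tiund{Z}Y)^d = X^d\tiund{Z^d}Y^d$ in part c): unwinding Definition \ref{def-hom-dis-ncat-1}, $W^d = \di{n}\cdots\di{1}\,\p{1}\cdots\p{n}\,W$, so it suffices to know that each $\p{k}$ and each $\di{k}$ commutes with pullbacks over discrete objects. For the $\p{k}$'s this is Remark \ref{rem-hom-dis-ncat} (applied at each level of the induction, using that $\p{k}$ of a discrete object is discrete); for the $\di{k}$'s, $\di{1}=d$ preserves all limits being a right adjoint to $ob$ in the relevant sense (or directly, discrete categories are closed under limits), and $\di{k}=\xi_k^{-1}\circ d$ inherits this since $\xi_k$ is an isomorphism of categories. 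Assembling these, $(X\tiund{Z}Y)^d = X^d\tiund{Z^d}Y^d$. The main obstacle I anticipate is the bookkeeping in the Segal-condition verification — making precise that iterated limits over a discrete base rearrange correctly, and handling the coproduct case where one must argue that the "mixed" fiber-product terms vanish; once the levelwise picture is set up this is routine but needs care with the $n$ nested simplicial directions.
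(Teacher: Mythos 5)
Your proof is correct, and for parts a) and b) it follows the same levelwise induction as the paper (each simplicial level lies in $\cathd{n-1}$ by the inductive hypothesis; $\p{n}$ commutes with coproducts and, by Remark \ref{rem-hom-dis-ncat}, with pullbacks over discrete objects, so condition (ii) of Definition \ref{def-hom-dis-ncat} holds). The genuine difference is in part c): the paper does not run the induction on the pullback directly, but instead decomposes $X\tiund{Z}Y=\coprod_{c\in Z}X(c)\times Y(c)$ using discreteness of $Z$, and then quotes a) and b); your route treats the pullback levelwise and verifies the Segal condition by interchanging limits over the discrete base. Both work. The paper's decomposition buys a shorter argument and makes the Segal condition for the pullback automatic once it is known for coproducts and products (at the mild cost of needing that the fibers $X(c),Y(c)$ are again in $\cathd{n}$); your direct argument avoids that but requires the limit-rearrangement bookkeeping you flag, and you are in fact more explicit than the paper about checking condition (i) — including the observation that the mixed fiber-product terms in the coproduct case are empty, a point the paper leaves implicit. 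Two small corrections to parenthetical justifications, neither of which affects the proof: $p$ is not a left adjoint (it is $q\circ m$, a left adjoint composed with a right adjoint), though it does visibly preserve coproducts, which is all you need; and $d$ is the \emph{left} adjoint to $ob$ (and right adjoint to connected components), so its preservation of pullbacks over discrete objects is best justified by your alternative remark that discrete objects are closed under these limits and the multinerve of the pullback is computed levelwise.
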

\begin{proof}\

\mk
a) By induction on $n$. It is clear for $n=1$. Suppose it holds for $n-1$ and let $X,Y\in \cathd{n}$. Since $\cathd{n}\subset\funcat{}{\cathd{n-1}}$ and coproducts in functor categories are computed pointwise, for each $s\geq 0$ we have, by induction hypothesis
\begin{equation*}
    (X \cop Y)_s=X_s \cop Y_s\in \cathd{n-1}\;.
\end{equation*}
Since $p$ commutes with coproducts, the same holds for $\p{n}$, thus by induction hypothesis
\begin{equation*}
    \p{n}(X \cop Y)= \p{n} X \cop \p{n} Y\in \cathd{n-1}\;.
\end{equation*}
this proves that $X \cop Y\in \cathd{n}$.

\mk
b) By induction on $n$. It is clear for $n=1$; suppose it holds for $n-1$. Then for each $s\geq 0$, by induction hypothesis
\begin{equation*}
    (X\times Y)_s=X_s \times Y_s\in \cathd{n-1}\;.
\end{equation*}
Since $\p{n}$ commutes with pullbacks over discrete objects (see Remark \ref{rem-hom-dis-ncat}) by the induction hypothesis
\begin{equation*}
    \p{n}(X\times Y)=\p{n}X\times \p{n}Y\in \cathd{n-1}\;.
\end{equation*}
This proves that $X \times Y\in\cathd{n}$.

\mk
c) Since $Z$ is discrete.
\begin{equation*}
    X\tiund{Z}Y=\underset{c\in Z}{\cop}X(c)\times Y(c)
\end{equation*}
where $X(c)$ (resp. $Y(c)$) is the pre-image of $c$ under $f$ (resp. $g$). Since $X(c), \; Y(c) \in \cathd{n}$, from a) and b) it follows that $X\tiund{Z}Y\in\cathd{n}$. Since by Remark \ref{rem-hom-dis-ncat} $\p{n}$ commutes with pullbacks over discrete objects for all $n$, we have
\begin{align*}
    &(X\tiund{Z}Y)^d= p\cdots \p{n}(X\tiund{Z}Y)= \\
    & =p\cdots \p{n}X \tiund{p\cdots \p{n}Z} p\cdots \p{n}Y=X^d\tiund{Z^d}Y^d\;.
\end{align*}
\end{proof}
Given $X\in\cathd{n}$, since $X_0^d$ is discrete and $X_1\in\cathd{n-1}$, by Lemma \ref{lem-copr-hom-disc}, for all $s\geq 2$,
\begin{equation*}
    \pro{X_1}{X_0^d}{s}\in\cathd{n-1}\;.
\end{equation*}
We can therefore consider the induced Segal maps
\begin{equation*}
    \hmu{s}:X_s= \pro{X_1}{X_0}{s}\rw \pro{X_1}{X_0^d}{s}
\end{equation*}
(see Definition \ref{def-ind-seg-map}). Using Lemma \ref{lem-neq-hom-disc} we prove an important property of this map.
\begin{proposition}\label{pro-ind-map-hom-disc}
    Let $X\in\cathd{n}$ and for each $s\geq 2$ consider the map in $\cathd{n-1}$
    \begin{equation*}
        \hat\mu_s:\pro{X_1}{X_0}{s}\rw \pro{X_1}{X^d_0}{s}
    \end{equation*}
    induced by $\zg\lo{n-1}:X_0\rw X_0^d$. Then $\hat\mu_s$ is a $\nm$-equivalence.
\end{proposition}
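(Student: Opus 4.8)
The plan is to reduce the statement to the $n=1$ case by working componentwise over the last simplicial direction, using that membership in $\cathd{n}$ is witnessed by the functor $\p{n}$ together with the levelwise structure in $\funcat{}{\cathd{n-1}}$. First I would unwind what $\hmu{s}$ is: by Lemma \ref{lem-neq-hom-disc} a map in $\cathd{n-1}$ is an $\nm$-equivalence precisely when it induces an isomorphism on discretizations, so it suffices to prove
\begin{equation*}
    \left(\pro{X_1}{X_0}{s}\right)^d \cong \left(\pro{X_1}{X_0^d}{s}\right)^d\;,
\end{equation*}
compatibly with $\hmu{s}$. The right-hand side is a pullback over the discrete object $X_0^d$, so Lemma \ref{lem-copr-hom-disc}(c) applies and gives
\begin{equation*}
    \left(\pro{X_1}{X_0^d}{s}\right)^d = \pro{X_1^d}{(X_0^d)^d}{s} = \pro{X_1^d}{X_0^d}{s}\;,
\end{equation*}
using that $X_0^d$ is already discrete. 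For the left-hand side, note that $X_0$ is a homotopically discrete $(n-1)$-fold category, and $\pro{X_1}{X_0}{s}$ is a pullback over $X_0$, which is not discrete; so I would first observe that the map $\zg\lo{n-1}:X_0\rw X_0^d$ is an $\nm$-equivalence (Remark \ref{rem-neq-hom-disc}) and that modding out along $X_0$ versus along $X_0^d$ does not change the discretization — this is exactly the content one extracts by iterating Lemma \ref{lem-copr-hom-disc}(c) once more, since $\left(\pro{X_1}{X_0}{s}\right)^d$ is computed by applying $p\cdots\p{n-1}$, which commutes with pullbacks over discrete objects, after first replacing the base $X_0$ by its discretization.

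More carefully, I would run an induction on $n$. For $n=1$ the statement is the classical fact that the Segal maps of an equivalence relation, corrected along the discretization of the object set, are equivalences of categories; since an equivalence relation is equivalent to a discrete category and $p$ preserves fiber products over discrete objects and equivalences (Lemma \ref{lem-p-pres-fib-pro}), both sides have the same $p$, hence $\hmu{s}$ is an equivalence. For the inductive step, apply the functor $\p{n-1}$ (which sends $\nm$-equivalences to $(n-2)$-equivalences and commutes with pullbacks over discrete objects, by Remark \ref{rem-hom-dis-ncat}): $\p{n-1}\hmu{s}$ is then the induced Segal map for the object $\p{n-1}X\in\cathd{n-1}$ in dimension $n-1$, hence an $(n-2)$-equivalence by the inductive hypothesis. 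It then remains to check the condition on fibers $X(a,b)$ from Definition \ref{def-hom-dis-ncat-1}: for $a,b$ in the relevant discrete object, the fiber of $\hmu{s}$ at $(a,b)$ is itself a product/pullback over discrete objects of fibers of the identity-type maps $X_1\rw X_1$, which are identities, so these fiber maps are identities and in particular $\nm$-equivalences. Combining, $\hmu{s}$ is an $\nm$-equivalence.

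The main obstacle I anticipate is bookkeeping rather than conceptual: making precise the identification of $\p{n-1}\hmu{s}$ with the $(n-1)$-dimensional induced Segal map of $\p{n-1}X$, and checking that the discretization $X_0^d$ of $X_0\in\cathd{n-1}$ is the same object that appears when one forms $(\p{n-1}X)_0^d$ — i.e. that the two ways of discretizing (first take $\p{n-1}$, then discretize; versus discretize directly) agree, which follows from the definition $X^d=\di{n}\cdots\di{1}\p{1}\cdots\p{n}X$ but needs to be spelled out. Equivalently, the cleanest route is probably to bypass the fiber condition entirely and argue directly via Lemma \ref{lem-neq-hom-disc}: show
\begin{equation*}
    \left(\pro{X_1}{X_0}{s}\right)^d \cong \pro{X_1^d}{X_0^d}{s} \cong \left(\pro{X_1}{X_0^d}{s}\right)^d
\end{equation*}
by two applications of Lemma \ref{lem-copr-hom-disc}(c) — once to the genuine pullback over $X_0^d$, and once after replacing $X_0$ by $X_0^d$ using that $\zg\lo{n-1}$ induces an isomorphism on discretizations — and that these isomorphisms are compatible with the map $\hmu{s}$. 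This avoids the induction on fibers at the cost of carefully tracking that the discretization functor $(-)^d$ turns the base change $X_0\rw X_0^d$ into an identity.
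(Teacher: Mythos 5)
Your overall strategy --- reduce via Lemma \ref{lem-neq-hom-disc} to an isomorphism of discretizations, and compute the target $\pro{X_1}{X_0^d}{s}$ with Lemma \ref{lem-copr-hom-disc}(c) --- is the same as the paper's. The gap is in how you handle the source. Both of your routes ultimately need
\begin{equation*}
\bigl(\pro{X_1}{X_0}{s}\bigr)^d \cong \pro{X_1^d}{X_0^d}{s}\;,
\end{equation*}
and you propose to get this from Lemma \ref{lem-copr-hom-disc}(c) ``after first replacing the base $X_0$ by its discretization'', or from the fact that $\zg\lo{n-1}$ induces an isomorphism on discretizations. Neither works as stated: Lemma \ref{lem-copr-hom-disc}(c) and Remark \ref{rem-hom-dis-ncat} apply only to pullbacks over \emph{discrete} objects, and $\pro{X_1}{X_0}{s}$ is a pullback over the non-discrete $X_0$. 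The assertion that replacing the base $X_0$ by $X_0^d$ does not change the discretization of the pullback is exactly the statement $(\pro{X_1}{X_0}{s})^d\cong(\pro{X_1}{X_0^d}{s})^d$ you are trying to prove, so that step is circular.

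The missing ingredient is condition (ii) of Definition \ref{def-hom-dis-ncat}: since $\p{n}X\in\cathd{n-1}$, it satisfies the Segal condition (i), which says precisely that
\begin{equation*}
\p{n-1}\bigl(\pro{X_1}{X_0}{s}\bigr)=\p{n-1}(X_s)=(\p{n}X)_s=\pro{\p{n-1}X_1}{\p{n-1}X_0}{s}\;,
\end{equation*}
and iterating this (the image under each further $\p{j}$ is again homotopically discrete) yields $(\pro{X_1}{X_0}{s})^d=\pro{X_1^d}{X_0^d}{s}$; this is the paper's claim \eqref{eq2-ind-map-hom-disc}, proved there by induction on $n$. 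With that in hand, your computation of the target closes the argument. Your inductive route comes closest to this --- identifying $\p{n-1}\hmu{s}$ with the induced Segal map of $\p{n}X$ (not of $\p{n-1}X$) is essentially the same observation --- but you attribute the needed commutation to Remark \ref{rem-hom-dis-ncat}, which does not cover the source, and the subsequent fiber check is both unnecessary (Lemma \ref{lem-neq-hom-disc} already suffices) and too sketchy to stand on its own.
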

\begin{proof}
We show this for $s=2$, the case $s>2$ being similar.
By Lemma \ref{lem-neq-hom-disc} it is enough to show that
\begin{equation}\label{eq-ind-map-hom-disc}
    (\tens{X_1}{X_0})^d\cong (\tens{X_1^d}{X^d_0})\;.
\end{equation}
Denote $\p{n-1,j}=\p{j}...\p{n-1}$ for $1\leq j\leq n-1$ and $\p{n-1,n-1}=\p{n-1}$. We claim that
\begin{equation}\label{eq2-ind-map-hom-disc}
    \p{n-1,j}(\tens{X_1}{X_0})=\tens{\p{n-1,j}X_1}{\p{n-1,j}X_0}\;.
\end{equation}
We prove this by induction on $n$. When $n=2$, $X\in\catwg{2}$ so that $p(\tens{X_1}{X_0})\cong p(\tens{X_1}{X_0^d})$. Suppose, inductively, the claim holds for $n-1$. Since $\p{n}X\in\cathd{n-1}$,
\begin{equation*}
    \p{n-1}(\tens{X_1}{X_0})= \tens{\p{n-1}X_1}{\p{n-1}X_0}\;.
\end{equation*}
By induction hypothesis applied to $\p{n}X$ we therefore obtain
\begin{equation*}
\begin{split}
    & \p{n-1,j}(\tens{X_1}{X_0})=\p{n-2,j}\p{n-1}(\tens{X_1}{X_0})= \\
   = \,& \p{n-2,j}(\tens{\p{n-1}X_1}{\p{n-1}X_0})= \\
   =\, & \tens{\p{n-2,j}\p{n-1}X_1}{\p{n-2,j}\p{n-1}X_0} = \\
   =\, & \tens{\p{n-1,j}X_1}{\p{n-1,j}X_0}\;.
\end{split}
\end{equation*}
This proves \eqref{eq2-ind-map-hom-disc}. In the case $j=1$ we obtain
\begin{equation*}
    (\tens{X_1}{X_0})^d=\tens{X_1^d}{X_0^d}\;.
\end{equation*}
Since $\p{n}$ commutes with pullbacks over discrete objects (see Remark \ref{rem-hom-dis-ncat}) we have
\begin{equation*}
\tens{X_1^d}{X_0^d}=(\tens{X_1}{X_0^d})^d
\end{equation*}
so that, from above, we conclude
\begin{equation*}
    (\tens{X_1}{X_0})^d \cong (\tens{X_1}{X_0^d})^d
\end{equation*}
as required.

\end{proof}
\subsection{$\mathbf{n}$-Fold models of 0-types.}\label{sbs-nfol-model}
We end this section with a discussion of the homotopical significance of $\cathd{n}$ as an \nfol model of 0-types.
\begin{definition}\label{def-class-sp-funct}
    The classifying space functor is the composite
    \begin{equation*}
         B:\cathd{n}\xrw{\Nb{n}}\funcat{n}{\Set}\xrw{diag}\funcat{}{\Set}
    \end{equation*}
    where  $\diag$ denotes the multi-diagonal defined by
    \begin{equation*}
    (\diag Y)_k=Y_{k\oset{n}{...}k}
   \end{equation*}
   for $Y\in\funcat{n}{\Set}$ and $k\geq 0$.
\end{definition}
\begin{proposition}\label{pro-sign-cathd}
    If $X\in\cathd{n}$, $B\zg_X:BX\rw BX^d$ is a weak homotopy equivalence. In particular, $BX$ is a 0-type with $ \pi_{i}(BX,x)=0$ for $i>0$ and $\pi_{0}BX= UX^d$ where $UX^d$ is the set underlying the discrete \nfol category $X^d$.
\end{proposition}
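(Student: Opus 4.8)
The plan is to prove the statement by induction on $n$, using the fact (Remark \ref{rem-neq-hom-disc}) that $\zg\lo{n}:X\rw X^d$ is an $n$-equivalence together with a ``gluing'' argument for the classifying space of a simplicial object. First I would record the base case $n=1$: if $X\in\cathd{}$ is an equivalence relation then $X\simeq X^d$ as categories, and the classifying space functor $B=\diag\circ N$ sends equivalences of categories to weak homotopy equivalences, so $B\zg_X$ is a weak equivalence; since $BX^d$ is discrete this also gives $\pi_i(BX)=0$ for $i>0$ and $\pi_0 BX = UX^d$.

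For the inductive step, suppose the result holds for $\cathd{n-1}$. Given $X\in\cathd{n}$, regard $X$ as a simplicial object in $\cathd{n-1}$ via $\cathd{n}\subset\funcat{}{\cathd{n-1}}$, and note that the multinerve (hence $B$) can be computed by first taking the nerve in the last direction and then applying the $(n-1)$-dimensional classifying space levelwise; concretely, $BX \cong \diag_{s}\,\bigl(B^{(n-1)}X_s\bigr)$ where $B^{(n-1)}$ is the classifying space functor on $\cathd{n-1}$ and the outer diagonal runs over the $s$-th (last) simplicial coordinate. The key homotopical input is that for a simplicial space $s\mapsto Z_s$ which is \emph{Reedy-nice} or, more simply, for which a levelwise weak equivalence induces a weak equivalence on diagonals, I get a weak equivalence on realizations. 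So I would compare the simplicial space $s\mapsto B^{(n-1)}X_s$ with $s\mapsto B^{(n-1)}(X^d)_s$. By the inductive hypothesis, since $\zg\lo{n-1}:X_s\rw X_s^d$ is an $(n-1)$-equivalence (this is where the naturality of $\zg$ and the definition of $n$-equivalence enter, together with Remark \ref{rem-neq-hom-disc}), the map $B^{(n-1)}X_s\rw B^{(n-1)}(X^d)_s$ is a weak equivalence for each $s$. Passing to diagonals yields that $B\zg_X:BX\rw BX^d$ is a weak homotopy equivalence. Since $X^d$ is a discrete $n$-fold category, $\Nb{n}X^d$ is constant with value $UX^d$, so $BX^d$ is the constant simplicial set on $UX^d$; hence $\pi_i(BX)\cong\pi_i(BX^d)=0$ for $i>0$ and $\pi_0 BX \cong \pi_0 BX^d = UX^d$.

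I expect the main obstacle to be the ``levelwise weak equivalence of simplicial spaces induces a weak equivalence on diagonals'' step: in general this requires a cofibrancy/Reedy condition, so I would need to check that the simplicial spaces $s\mapsto B^{(n-1)}X_s$ arising here are sufficiently nice (for instance that the degeneracies are cofibrations, which holds because the nerve of a category is always a Reedy-cofibrant simplicial set and $B^{(n-1)}$ of an $(n{-}1)$-fold category is a realization of such, or alternatively one can invoke that all objects in sight are CW after geometric realization so that the standard theorem on diagonals of simplicial spaces applies). A secondary point to be careful about is the identification $BX\cong \diag_s B^{(n-1)}X_s$, which is just a reshuffling of the $n$-fold diagonal into an iterated diagonal and follows from Fubini for diagonals, but should be stated cleanly. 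Once these two bookkeeping facts are in place, the induction goes through without further difficulty.
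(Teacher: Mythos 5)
Your overall architecture is the same as the paper's: induction on $n$, writing $BX$ as the diagonal of the bisimplicial set $s\mapsto BX_s$, and using the fact that a levelwise weak homotopy equivalence of bisimplicial sets induces a weak equivalence on diagonals. However, the inductive step as written contains a false claim. You compare $s\mapsto B X_s$ with $s\mapsto B\bigl((X^d)_s\bigr)$ and assert that the levelwise map is a weak equivalence ``by the inductive hypothesis''. What the inductive hypothesis gives is a weak equivalence $BX_s\rw B\bigl((X_s)^d\bigr)$, and $(X_s)^d$ is \emph{not} $(X^d)_s$: the latter is the constant discrete $(n-1)$-fold category on $UX^d$, while the former is the discretization of the $s$-th level. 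Already at $s=0$ the comparison map $U(X_0)^d\rw UX^d$ is the quotient identifying objects joined by a cell in the outer simplicial direction (for $n=2$ it is $pX_0\rw p(\p{2}X)$, cf.\ Example \ref{ex-hom-disc-neq-rel}), and it is not injective in general. A concrete counterexample: take $n=2$ and $X$ the double category that is a nontrivial equivalence relation $E$ in the outer direction and discrete in the inner one; then $B X_0$ is the discrete space on $\operatorname{ob}E$ while $B(X^d)_0$ is a point, so the levelwise map is not a weak equivalence even though the proposition itself holds for this $X$. The repair is exactly what the paper does: compare levelwise with $(X_s)^d=(\p{2}\cdots\p{n}X)_s$. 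The diagonal of that target bisimplicial set is then not $BX^d$ but the nerve of the equivalence relation $\p{2}\cdots\p{n}X\in\cathd{}$, and one finishes with one further application of the base case to the map $B(\p{2}\cdots\p{n}X)\rw BX^d$. That extra step after taking diagonals is what is missing from your argument.

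Two smaller points. The obstacle you flag as the main one --- Reedy/cofibrancy hypotheses for ``levelwise weak equivalence implies weak equivalence on diagonals'' --- is not an issue: for bisimplicial \emph{sets} this holds with no hypotheses whatsoever (Goerss--Jardine, which is precisely what the paper cites), so no cofibrancy check is needed. Also, the relevant input in the inductive step is the inductive hypothesis applied to the discretization map of $X_s\in\cathd{n-1}$, not the fact that $\zg\lo{n-1}$ is an $(n-1)$-equivalence; the proposition is being proved only for discretization maps, not for arbitrary $(n-1)$-equivalences, so phrasing the step through general equivalences quietly assumes more than has been established.
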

\begin{proof}
By induction on $n$. For $n=1$, $X$ is a groupoid with no non-trivial loops, hence $\pi_1(BX,x)=0$ while $\pi_0 BX=UX^d$; suppose the statement holds for $\nm$.

The functor $B$ is also the composite
\begin{equation*}
    B:\cathd{n}\xrw{N_{1}}\funcat{}{\cathd{n-1}}\xrw{\ovl{B}} \funcat{}{\funcat{}{\Set}}\xrw{diag}\funcat{}{\Set}\;.
\end{equation*}
Thus $B\zg_{X}$ is obtained by applying $diag$ to the map of bisimplicial sets $N_1\ovl{B}\zg_{X}$. For each $s\geq 0$ the latter is given by
\begin{equation*}
    (N_1\ovl{B}\zg_{X})_s=B\zg_s: B X_s \rw B X_s^d=B(\p{2}...\p{n}X)_s
\end{equation*}
and this is a weak homotopy equivalence by induction hypothesis.

A map of bisimplicial sets which is a levelwise weak homotopy equivalence induces a weak homotopy equivalence of diagonals (see \cite{Jard}). Hence
\begin{equation*}
    \diag N_1\ovl{B}\zg_X=B\zg_X
\end{equation*}
is a weak homotopy equivalence, as required. Thus $BX$ is weakly homotopy equivalent to $B(\p{2}...\p{n}X)$, which is a 0-type since $\p{2}...\p{n}X\in\cathd{}$. Further $\pi_{0}BX\cong \pi_0 B(\p{2}...\p{n}X) \cong Up\p{2}...\p{n}X \cong UX^d$.

\end{proof}

\section{Higher equivalence relations}\label{highrel}
In this section we give a different description of weakly globular \nfol category via a notion of iterated internal equivalence relation. The notion of internal equivalence relation (Definition \ref{def-int-eq-rel}) associated to a morphism $f:A\rw B$ in a category $\clC$ with finite limits is known. When $\clC=\Set$ and $f:A\rw B$ is surjective, this affords the category $\cathd{}$ and the category $A[f]\in\cathd{}$ corresponding to $f$ has set of connected components given by $q A[f]= p A[f]=B$.

We define $\eqr{n}$ by iterating this notion in $(n-1)$-fold categories in such a way that the target $Y$ of the morphism $f:X\rw Y$ in $\cat{n-1}$ belongs to $\eqr{n-1}$ and $\Nb{n-1}f$ is a levelwise surjection in $\Set$. This surjectivity condition ensures that there is a functor $\p{n}:\eqr{n}\rw\eqr{n-1}$ with $\p{n}X[f]=Y$.

In Theorem \ref{the-hom-disc-neq-rel} we reconcile the definition of $\eqr{n}$ with the definition of $\cathd{n}$ of the previous section.
\begin{definition}\label{def-int-eq-rel}
Let $A\rw B$ be a morphism in a category $\clC$ with finite limits. The diagonal map defines a unique section $s:A\rw\ata$ (so that ${p_{1}s=\Id_{A}=p_{2}s}$ where ${\ata}$ is the pullback of  ${A\xrw{f}B\xlw{f}A}$ and ${p_{1},p_{2}:\ata\rw A}$ are the two projections). The commutative diagram
\begin{equation*}
    \xymatrix{
    \ata \ar[rr]^{p_{1}} \ar[d]_{p_{2}} && A \ar[d]_{f} &&
    \ata \ar[ll]_{p_{2}} \ar[d]^{p_{1}}\\
    A \ar[rr]_{f} && B && A \ar[ll]^{f}
    }
\end{equation*}
defines a unique morphism ${m:(\ata)\tiund{A}(\ata)\rw\ata}$ such that ${p_{2}m=p_{2}\pi_{2}}$ and $p_{1}m=p_{1}\pi_{1}$ where $\pi_{1}$ and ${\pi_{2}}$ are the two projections. We  denote by $\Af$ the following object of $\Cat(\clC)$
\begin{equation*}
\xymatrix{
(\ata)\tiund{A}(\ata)\ar[rr]^<<<<<<<<{m} && \ata \ar@<1.5ex>[rr]^{p_{1}}
\ar@<-.5ex>[rr]^{p_{2}} && A \ar@<1.5ex>[ll]^{s} }
\end{equation*}
\noindent It easy to see that ${\Af}$ is an internal groupoid in $\clC$.

An object of $\cathd{}$ has the form $\Af$ for some surjective map of sets
$f:A\rw B$.
\end{definition}
\begin{definition}\label{def1-int-eq-rel}
    We define $\eqr{n}\subset \cat{n}$ by induction on $n$. For $n=1$, $\eqr{1}=\cathd{}$. Suppose, inductively, we defined $\eqr{n-1}\subset\cat{n-1}$ and let $f:X\rw Y$ be a morphism in $\cat{n-1}$ with $Y\in\eqr{n-1}$ such that, for all $\us\in\Dmenop$, $(\Nb{n-1}f)_{\us}$ is surjective, where $\Nb{n-1}:\cat{n-1}\rw \funcat{n-1}{\Set}$ is the multinerve.

    We define $\eqr{n}$ to be the full subcategory of $\cat{n}$ whose objects have the form $X[f]$ with $f$ as above.
\end{definition}
\begin{remark}\label{rem-int-eq-rel}
    Let $\za:X[f]\rw X'[f']$ be a morphism in $\cathd{n}$, with $f:X\rw Y$ and $f':X'\rw Y'$. For each $\us \in\dop{n-1}$, denote
    \begin{align*}
        &(\Nb{n-1}X)_{\us}=X_{\us}\;,\\
        &(\Nb{n-1}f)_{\us}=f_{\us}\;;
    \end{align*}
    and similarly for $f'_{\us}$. Since $f_{\us}$ is surjective
    \begin{equation*}
        q X_{\us}[f_{\us}]= p X_{\us}[f_{\us}]= Y_{\us}
    \end{equation*}
    and there is a functor
    \begin{equation*}
        X_{\us}[f_{\us}]\rw d Y_{\us}\;.
    \end{equation*}
    We therefore have a commuting diagram in $\Cat$
    \begin{equation*}
        \xymatrix@C=40pt{
        X_{\us}[f_{\us}] \ar[r] \ar_{\za_{\us}}[d] & d Y_{\us} \ar^{\ovl{\za}_{\us}}[d]\\
        X'_{\us}[f'_{\us}] \ar[r] & d Y'_{\us}
        }
    \end{equation*}
    inducing a commuting diagram in $\Set$
    \begin{equation*}
        \xymatrix@C=40pt{
        X_{\us} \ar^{f_{\us}}[r] \ar_{\za_{\us}}[d] &  Y_{\us} \ar^{\ovl{\za}_{\us}}[d]\\
        X'_{\us} \ar^{f'_{\us}}[r] &  Y'_{\us}\;.
        }
    \end{equation*}
    Since this holds for all $\us$, we conclude that there is a commuting diagram in $\cat{n-1}$
    \begin{equation*}
        \xymatrix@C=40pt{
        X \ar^{f}[r] \ar_{\za}[d] &  Y \ar^{\ovl{\za}}[d]\\
        X' \ar^{f'}[r] &  Y'\;.
        }
    \end{equation*}

\end{remark}
\bk
\begin{theorem}\label{the-hom-disc-neq-rel}
   There is an isomorphism of categories $\eqr{n} \cong \cathd{n}$.
\end{theorem}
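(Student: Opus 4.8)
The plan is to prove the isomorphism $\eqr{n}\cong\cathd{n}$ by induction on $n$, constructing mutually inverse functors. For $n=1$ the two categories are equal by definition, so the base case is trivial. For the inductive step, assume $\eqr{n-1}\cong\cathd{n-1}$; we must show that an object of the form $X[f]$ (with $f:X\rw Y$ a morphism in $\cat{n-1}$, $Y\in\eqr{n-1}$, and $(\Nb{n-1}f)_{\us}$ surjective for all $\us$) is the same thing as a simplicial object in $\cathd{n-1}$ satisfying the two conditions (i) and (ii) of Definition \ref{def-hom-dis-ncat}, and conversely.

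First I would show that every $X[f]\in\eqr{n}$ lies in $\cathd{n}$. Condition (i) is immediate: $X[f]$ is an internal groupoid (Definition \ref{def-int-eq-rel} states $\Af$ is always an internal groupoid), so its Segal maps are isomorphisms, giving $(X[f])_s\cong\pro{(X[f])_1}{(X[f])_0}{s}$; combined with the inductive hypothesis that each level $(X[f])_s=\tens{X}{Y}\oset{s}{\cdots}$ — built from $X,Y\in\eqr{n-1}=\cathd{n-1}$ by pullback over the discrete-in-direction object — lies in $\cathd{n-1}$ via Lemma \ref{lem-copr-hom-disc}. For condition (ii), the key is the surjectivity hypothesis: because $(\Nb{n-1}f)_{\us}$ is surjective, for each $\us$ we have $q X_{\us}[f_{\us}]=p X_{\us}[f_{\us}]=Y_{\us}$, so the levelwise functor $\op{n-1}$ applied to the simplicial object $X[f]$ recovers (the multinerve of) $Y\in\cathd{n-1}$; this is exactly $\p{n}X[f]=Y$ and shows $\op{n-1}$ restricts to $\p{n}:\eqr{n}\rw\cathd{n-1}$. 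Conversely, given $W\in\cathd{n}$, one sets $Y=\p{n}W\in\cathd{n-1}$ and $f=\zgu{n}_W:W_0\rw\cdots$; more precisely one takes the map $W_1\rw \di{n-1}\p{n-1}W_1$... — actually the right map is $\zg\lo{n-1}\colon W_0\to W_0^d$ composed appropriately, so that $W\cong W_0[f]$, using that condition (i) forces $W$ to be an internal groupoid on $W_0$ and the pullback description of $W_0^d\tiund{}W_0^d$ together with $q$ preserving fibre products over discrete objects (Lemma \ref{lem-q-pres-fib-pro}) identifies $W$ with the equivalence-relation construction.

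The functoriality of both assignments, and the check that they are mutually inverse, should follow from Remark \ref{rem-int-eq-rel} (which already produces, from a morphism $\za:X[f]\rw X'[f']$ in $\eqr{n}$, the induced commuting square relating $f$ and $f'$) together with the inductive hypothesis. I expect the main obstacle to be verifying that the map $f$ reconstructed from $W\in\cathd{n}$ indeed satisfies the levelwise surjectivity condition and that $W\cong W_0[f]$ on the nose (not merely up to equivalence): this requires carefully unwinding Definition \ref{def-hom-dis-ncat}(ii), using that $(\p{n}W)_{\us}=pW_{\us}$ and that $W_{\us}\in\cathd{}$ is an actual equivalence relation, hence literally of the form $A[g]$ for a surjection $g$, and then assembling these levelwise surjections into a single morphism in $\cat{n-1}$ via Lemma \ref{lem-multin-iff}. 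The compatibility of the groupoidal $n$-th direction (condition (i)) with the internal-groupoid output of Definition \ref{def-int-eq-rel} is what makes this match up.
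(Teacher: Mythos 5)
Your overall strategy (induction on $n$, with base case $n=1$ definitional; show each $X[f]\in\eqr{n}$ lies in $\cathd{n}$ and that each object of $\cathd{n}$ arises as some $A[g]$) is the paper's, and your treatment of condition (ii) via the surjectivity of $(\Nb{n-1}f)_{\us}$, giving $pX_{\us}[f_{\us}]=Y_{\us}$ and hence $\p{n}X[f]=Y$, is correct. However, there are two genuine gaps, both traceable to a confusion between the simplicial direction of $\funcat{}{\cathd{n-1}}$ (the first direction of $X[f]$) and the new groupoidal direction created by the $[f]$-construction (the $n$-th). In the forward direction you verify that the levels $\pro{X}{Y}{s}$ of the internal groupoid $X[f]$ lie in $\cathd{n-1}$ by Lemma \ref{lem-copr-hom-disc}, ``since $X,Y\in\eqr{n-1}=\cathd{n-1}$ and the pullback is over a discrete object''. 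This fails on three counts: Definition \ref{def1-int-eq-rel} imposes nothing on $X$ beyond $X\in\cat{n-1}$ (only $Y$ is required to lie in $\eqr{n-1}$); $Y$ is not discrete, so Lemma \ref{lem-copr-hom-disc} does not apply; and these are in any case the wrong levels --- what Definition \ref{def-hom-dis-ncat} requires is that the levels $(X[f])_{s_1}$ indexed by the first direction (those entering Lemma \ref{lem-pos-grou-hom-disc}) lie in $\cathd{n-1}$. The argument that works is the identity $(X[f])_{s_1}=X_{s_1}[f_{s_1}]$ (the $[\,\cdot\,]$-construction commutes with evaluation since pullbacks in $\cat{n-1}$ are computed levelwise), together with $Y_{s_1}\in\cathd{n-2}=\eqr{n-2}$ and the levelwise surjectivity of $f_{s_1}$, whence $X_{s_1}[f_{s_1}]\in\eqr{n-1}=\cathd{n-1}$ by the inductive hypothesis of the theorem itself.

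In the converse direction, your final choice of reconstructing map, $\zg\lo{n-1}:W_0\rw W_0^d$, has the wrong target: $W_0^d$ is discrete, whereas the quotient of the $n$-th-direction equivalence relation underlying $W$ is $\p{n}W$, in general a non-discrete object of $\cathd{n-1}$, so $W_0[\zg\lo{n-1}]$ would be a strictly coarser object than $W$. The correct map is $f_nW=(\xi_n\zg^{(n)}_W)_0:(\xi_nW)_0\rw\p{n}W$, the object-of-objects part in the $n$-th direction of the unit $W\rw\di{n}\p{n}W$; note also that the source is $(\xi_nW)_0$, not the first-direction level $W_0$. Moreover the identity $W=(\xi_nW)_0[f_nW]$, which you correctly flag as the main obstacle, is left unproved in your sketch; the paper establishes it by a second induction, reducing to $W_s=(\xi_{n-1}W_s)_0[f_{n-1}W_s]$ level by level in the first direction. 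Your remaining ingredients --- surjectivity of the reconstructed map from $W_{\us}\in\cathd{}$ (Lemma \ref{lem-pos-grou-hom-disc}), $\p{n}W\in\eqr{n-1}$ by induction, and the reduction of functoriality to the fact that both categories are full subcategories of $\cat{n}$ --- are the right ones.
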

\begin{proof}
By induction on $n$. For $n=1$, it holds by definition. Suppose this is true for all $k\leq n-1$. Let $X[f]\in\eqr{n}$ with $f: X\rw Y$ a morphism in $\cat{n-1}$ and $Y\in\eqr{n-1}$.

To show that $X[f]\in\cathd{n}$ we need to show that, for all $s_1\geq 0$, $(X[f])_{s_1}\in\cathd{n-1}$ and $\p{n}X[f]\in\cathd{n-1}$ where, for all $\us\in\Dmenop$,
\begin{equation*}
    (\p{n}X[f])_{\us}=p(X[f])_{\us}\;.
\end{equation*}
%

For all $s_1\geq 0$
\begin{equation*}
    (X[f])_{s_1}=X_{s_1}[f_{1}]\;.
\end{equation*}
There is a morphism $f_{s_1}:X_{s_1}\rw Y_{s_1}$ in $\cat{n-2}$; since $Y\in\eqr{n-1}$, by induction hypothesis $Y\in\cathd{n-1}$, thus by definition $Y_{s_1}\in\cathd{n-2}$ and, by induction hypothesis again, $Y_{s_1}\in\eqr{n-2}$. Further, $(f_{s_1})_{s_2...s_{n-1}}=f_{s_1s_2...s_{n-1}}$ is surjective.
Thus, by definition, $X_{s_1}[f_{s_1}]\in\eqr{n-1}=\cathd{n-1}$, that is,
\begin{equation*}
    (X[f])_{s_1}\in \cathd{n-1}\;.
\end{equation*}
Since $f_{\us}$ is surjective, we have
\begin{equation*}
    p X_{\us}[f_{\us}]=Y_{\us}
\end{equation*}
which implies $\p{n}X[f]=Y\in\cathd{n-1}$, as required.

Conversely, let $X\in\cathd{n}$. Consider the morphism in $\cat{n}$
\begin{equation*}
    \xi_n\zg_X\up{n}:\xi_n X\rw \xi_n \di{n}\p{n}X
\end{equation*}
where $\xi_n$ is as in Proposition \ref{pro-assoc-iso}. At the object of objects level this gives a morphism in $\cat{n-1}$
\begin{equation*}
    f_n X=(\xi_n\zg_X\up{n})_0:(\xi_n X)_0\rw \p{n} X\;.
\end{equation*}
We claim that
\begin{equation}\label{eq1-hom-disc-neq-rel}
    X=(\xi_n X)_0[f_n X]\;.
\end{equation}
We show this by induction on $n$. It is clear for $n=1$ since if $X\in\cathd{}$, $X=X_0[f]$ where $f:X_0\rw pX$.  Suppose it holds for $n-1$. To show \eqref{eq1-hom-disc-neq-rel} it is enough to show that, for each $s\geq 0$,
\begin{equation}\label{eq2-hom-disc-neq-rel}
    X_s=((\xi_n X)_0[f_n X])_s\;.
\end{equation}
But
\begin{equation*}
    ((\xi_n X)_0[f_n X])_s=(\xi_{n-1} X_s)_0[f_{n-1}X_s]
\end{equation*}
where $f_{n-1}X_s=(\xi_{n-1}\zg\up{n-1}_{X_s})_0$

\nid Hence \eqref{eq2-hom-disc-neq-rel} follows by inductive hypothesis applied to $X_s$.

By Lemma \ref{lem-pos-grou-hom-disc}, for each $(s_1,\ldots,s_{n-1})\in\Dmenop$, the map
\begin{equation*}
    (f_n X)_{s_1,\ldots,s_{n-1}}=(\zg_X\up{n})_{s_1,\ldots,s_{n-1},0}: X_{s_1,\ldots,s_{n-1},0}\rw p X_{s_1,\ldots,s_{n-1}}
\end{equation*}
is surjective (as $X_{s_1,\ldots,s_{n-1}}\in\cathd{}$). Also, $\p{n}X\in\cathd{n-1}$ thus by inductive hypothesis $\p{n}X\in\eqr{n-1}$. By \eqref{eq1-hom-disc-neq-rel} and by definition we conclude that $X\in\eqr{n}$.

\end{proof}

\bk

\end{document}